\newcommand{\tr}{\mathop\mathrm{tr}}
\newcommand{\spn}{\mathop\mathrm{span}}
\newcommand{\coh}{\mathrm{coh}}
\newcommand{\FUNTF}{\mathrm{FUNTF}}
\newtheorem{thm}{Theorem}[section]
\newtheorem{theorem}[thm]{Theorem}
\newtheorem{corollary}[thm]{Corollary}
\newtheorem{problem}[thm]{Problem}
\newtheorem{lemma}[thm]{Lemma}
\newtheorem{proposition}[thm]{Proposition}
\newtheorem{definition}[thm]{Definition}
\theoremstyle{remark}
\newtheorem{remark}[thm]{Remark}
\newtheorem{ex}[thm]{Example}
\newcommand{\RR}{\mathbb R}
\newcommand{\NN}{\mathbb N}
\newcommand{\CC}{\mathbb C}
\newcommand{\ip}[2]{\left\langle#1,#2\right\rangle}
\newcommand{\absip}[2]{\left| \left\langle#1,#2\right\rangle \right|}
\begin{document}

\title{The core of a Grassmannian frame}
\author[Casazza, Campbell, Tran]{Peter G. Casazza, Ian Campbell, Tin T. Tran}
\address{Campbell/Casazza: Department of Mathematics, University
of Missouri, Columbia, MO 65211-4100; \\
Tran: Department of Mathematics and Statistics 11200 SW 8th Street, DM 430 Miami, FL 33199 }

\thanks{The authors were supported by
 NSF DMS 1906025}

\email{casazzapeter40@gmail.com}
\email{ipcgy9@mail.missouri.edu}
\email{tinmizzou@gmail.com}
\begin{abstract} 
Let $X=\{x_i\}_{i=1}^m$ be a set of unit vectors in $\RR^n$. The coherence of $X$ is $\coh(X):=\max_{i\not=j}|\langle x_i, x_j\rangle|$. 
A vector $x\in X$ is said to be isolable if there are no unit vectors $x'$ arbitrarily close to $x$ such that $|\langle x', y\rangle|<\coh(X)$ for all other vectors $y$ in $X$.
We define the {\bf core} of a Grassmannian frame $X=\{x_i\}_{i=1}^m$ in $\RR^n$ at angle $\alpha$ as a maximal subset
of $X$ which has coherence $\alpha$ and has no isolable vectors. In other words, if $Y$ is a subset of $X$, $\coh(Y)=\alpha$, and $Y$ has no isolable vectors, then $Y$ is a subset of the core.
We will show that every Grassmannian frame of $m>n$ vectors for $\RR^n$ has the
 property that each vector in the core makes angle $\alpha$ with a spanning family from the core. Consequently, the core consists of $\ge n+1$ vectors. We then develop other properties of Grassmannian frames and of the core. 
\end{abstract}
\maketitle

\section{introduction}

For $1\leq k < n$, denote by $\mathcal{G}(k, \mathbb{F}^n)$ the set of all $k$-dimensional subspaces of a finite dimensional Hilbert space $\mathbb{F}^n$ ($\mathbb{F}=\RR$ or $\CC$). For any two elements $V, W$ in $\mathcal{G}(k, \mathbb{F}^n)$ with respective orthogonal projections $P, Q$,  the {\it chordal distance} is defined by

	$$d_C(V, W)=\frac{1}{\sqrt{2}}\| P-Q\|_{HS}=\sqrt{k-\tr(PQ)},$$ where $\|\cdot\|_{HS}$ is the Hilbert-Schmidt norm.

The {\bf subspace packing problem} is the problem of finding $m$ elements in $\mathcal{G}(k, \mathbb{F}^n)$ so that the minimum distance between any two of them is as large as possible. Note that this problem is equivalent to finding $m$ orthogonal projections, $\mathcal{P}=\{P_i\}_{i=1}^m$, each of rank $k$, so that maximum pairwise ``angle", $\tr(P_iP_j)$, is minimal. 
When all subspaces have dimension one the problem is known as the {\bf line packing problem}. In this case, if we identify each subspace with a unit vector spanning the space, then the problem is equivalent to finding a set of unit vectors $X=\{x_i\}_{i=1}^m$ in $\mathbb{F}^n$ of minimal \textit{coherence}, \[\coh(X):=\max_{1\le i\not= j\le m}\absip{x_i}{x_j}.\]
\begin{definition}
Solutions to the minimal coherence problem are called {\bf $(m, n)$-Grassmannian frames} and the minimal coherence, denoted $\alpha_{m, n}$, is called the {\bf Grassmannian angle} of the frame. 
\end{definition}
For the sake of brevity the parameters $m$ and $n$ are typically omitted if they are clear from  context. Additionally, the Grassmannian angle is typically referred to simply as the ``angle" of the frame.

There has been a significant amount of attention given to Grassmannian frames because they are tremendously useful in a wide variety of  applications such as quantum information theory \cite{RBSC, Z}, coding theory \cite{HP, SH}, digital fingerprinting \cite{MQKF}, and compressive sensing \cite{BCM, BFMW, CENR}. It is shown in \cite{SH} that Grassmannian frames also impact a number of fundamental problems in several areas of mathematics including spherical codes, spherical designs, equiangular line sets, equilateral point sets, and strongly regular graphs. 

A special class of Grassmannian frames are the so-called {\it Equiangular tight frames} (ETFs). These are unit-norm, tight frames in which the inner products between any two vectors have the same magnitude. The following important result from \cite{SH} characterizes these frames.  
\begin{theorem}[Welch]\label{Welch}
	Let $X=\{x_i\}_{i=1}^m$ be a unit-norm frame for $\mathbb{F}^n$. Then 
\[\coh(X)\geq\sqrt{\dfrac{m-n}{n(m-1)}},\] and equality holds if and only if $X$ is an equiangular tight frame. 
\end{theorem}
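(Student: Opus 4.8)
The plan is to prove the inequality through a single trace computation on the Gram matrix and then read off the equality conditions. First I would introduce the synthesis operator $T\colon \mathbb{F}^m\to\mathbb{F}^n$ defined by $Te_i=x_i$, so that the frame operator is $S=TT^*$ (an $n\times n$ matrix) and the Gram matrix is $G=T^*T$ (an $m\times m$ matrix with entries $G_{ij}=\ip{x_j}{x_i}$). Because the $x_i$ are unit vectors, $\tr(S)=\tr(G)=m$, and because $X$ is a frame for $\mathbb{F}^n$ the operator $S$ is positive definite with exactly $n$ positive eigenvalues $\lambda_1,\dots,\lambda_n$, which are precisely the nonzero eigenvalues of $G$.

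The key identity is $\tr(S^2)=\tr(G^2)$, valid since $S$ and $G$ share the same nonzero spectrum. On one side, expanding the Gram matrix gives
\[
\tr(G^2)=\sum_{i,j}\absip{x_i}{x_j}^2 = m+\sum_{i\neq j}\absip{x_i}{x_j}^2 \le m+m(m-1)\coh(X)^2,
\]
using that each of the $m(m-1)$ off-diagonal terms is at most $\coh(X)^2$. On the other side, Cauchy--Schwarz applied to the eigenvalues of $S$ yields $\tr(S)^2=\bigl(\sum_i\lambda_i\bigr)^2\le n\sum_i\lambda_i^2=n\,\tr(S^2)$, so that $\tr(G^2)=\tr(S^2)\ge m^2/n$. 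Chaining these two bounds and dividing through by $m$ gives $m/n\le 1+(m-1)\coh(X)^2$, which rearranges to the Welch bound $\coh(X)^2\ge (m-n)/\bigl(n(m-1)\bigr)$.

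For the equality characterization I would track when each of the two estimates is tight. Equality in the Cauchy--Schwarz step forces all eigenvalues $\lambda_i$ to coincide; since they sum to $m$, each equals $m/n$, so $S=(m/n)I$ and $X$ is a tight frame. Equality in the off-diagonal bound forces $\absip{x_i}{x_j}=\coh(X)$ for every $i\neq j$, i.e. $X$ is equiangular. Conversely, if $X$ is an equiangular tight frame then both steps are equalities, so the bound is attained exactly on the class of ETFs.

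The computation itself is routine; the only points requiring care are invoking the frame hypothesis to guarantee that $S$ is invertible---so that $G$ has precisely $n$ nonzero eigenvalues and the Cauchy--Schwarz step is applied to the correct count---and verifying that the two equality conditions are genuinely independent, so that their conjunction, tightness together with equiangularity, is exactly the definition of an ETF. I expect the main subtlety to lie in the bookkeeping of the equality analysis rather than in any single hard estimate.
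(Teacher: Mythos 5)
Your proof is correct, and it is the standard trace argument for the Welch bound: comparing $\tr(G^2)=\tr(S^2)$ from both sides, with Cauchy--Schwarz on the eigenvalues of $S$ giving the lower bound $m^2/n$ and the coherence giving the upper bound, then reading off tightness and equiangularity from the two equality cases. Note that the paper itself does not prove this theorem --- it is quoted from the references \cite{SH, We} --- and your argument is essentially the one found there, so there is nothing to reconcile; the only cosmetic remark is that the spanning (frame) hypothesis is not strictly needed for the inequality step, since Cauchy--Schwarz over $n$ eigenvalues (some possibly zero) still yields $\tr(S)^2\le n\,\tr(S^2)$, and the equality analysis then forces all eigenvalues equal and nonzero, so spanning comes out rather than goes in.
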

This bound is usually referred to as the {\it Welch bound}
\cite{We}. Theorem \ref{Welch} is significant
because Grassmannian frames are very difficult to identify in general \cite{BK, FJM, MP}, and the additional structural information afforded by ETFs makes them more accessible. Examples of ETFs include the cube roots of unity (viewed as vectors in $\RR^2$)
and the vertices of the origin-centered tetrahedron (viewed as vectors in $\RR^3$). The apparent beauty of ETFs coupled with their importance as Grassmannian frames has made them the subject of active research recently; see the survey paper \cite{FM} and references therein. It is known that ETFs can exist only if $m\leq \frac{n(n+1)}{2}$ when $\mathbb{F}=\RR$, and $m\leq n^2$ when $\mathbb{F}=\CC$, which is known as the {\it Gerzon bound} \cite{LS, SH, T}. An ETF  
which achieves the Gerzon bound is called a {\it maximal ETF}. To date, the existence of maximal ETFs is an open question, not only in frame theory but also in many other branches of mathematics. In the real case, the existence of such ETFs requires the dimension $n$ to be 2, 3, or of the form: $n=(2k+1)^2-2$ for $k\in \mathbb{N}$. Currently, examples of maximal real ETFs are known only for
dimensions $n = 2, 3, 7, 23$ \cite{FM}. Unlike the real case, maximal ETFs in the complex case are conjectured to exist in every dimension. This is known as {\it Zauner's conjecture} \cite{Z} and it is a deep problem for which solutions
in only a finite number of dimensions have been found \cite{FHS, RBSC}. In quantum mechanics, such ETFs are called symmetric, informationally complete, positive operator-valued measures (SIC-POVMs). In this context maximal ETFs are foundational to the theory of quantum Bayesianism \cite{FS} and have further applications in both quantum state tomography \cite{CFS} and quantum cryptography \cite{FS1}.

When the number of vectors exceeds the Gerzon bound, the {\it orthoplex bound} \cite{CHS, JKM, R} offers an alternative way to construct best packings of lines with specific geometric characteristics. The orthoplex bound
states that the maximal magnitude among the inner products of pairs of vectors cannot be smaller than $\tfrac{1}{\sqrt{n}}$. Examples of such Grassmannian frames are maximal sets of {\it mutually unbiased bases}. 
A family of orthonormal bases $\{e_{ij}\}_{i=1,j=1}^{\ m,\ n}$ for $\mathbb{F}^n$ is {\it mutually unbiased} if for every
$1\le i \neq \ell \le m$  and every $1\le j,k\le n$ we have $|\langle e_{ij},e_{\ell k}\rangle|=\frac{1}{\sqrt{n}}$.
Other Grassmannian frames that saturate the orthoplex bound can be found in \cite{BH1, CFHT}.

\section{Background}

Hilbert space frames were defined by Duffin and Schaeffer \cite{DS} in 1952 to address some deep problems in non-harmonic Fourier series. However, the ideas of Duffin and Schaeffer did not seem to generate much general interest outside of non-harmonic Fourier series until the landmark paper of Daubechies, Grossmann, and Meyer \cite{DGM} in 1986. After this groundbreaking work, the theory of frames began to be more widely studied. To date, frames have become a powerful tool and have broad applications in many areas of mathematics, both pure and applied. This includes time-frequency analysis \cite{Gr}, wireless communications \cite{S, SH}, compressed sensing \cite{BCM, BFMW, CENR}, coding theory \cite{HP, SH}, and quantum information theory \cite{RBSC, Z}, to name a few. In this section, we will recall some basic facts about finite frame theory 
and introduce the notation used in this paper.

Throughout, for any natural number $m$, we denote by $[m]$ the set $[m]:=\{1, 2, \ldots, m\}$. We start with the definition of finite frames. 

\begin{definition}
	A sequence of vectors $X=\{x_i\}_{i=1}^m$ in a Hilbert space $\mathbb{F}^n$ is a {\bf frame} for $\mathbb{F}^n$ if there are constants $0<A\leq B<\infty$ such that 
	\begin{align*}\label{eq1.1}
	A\Vert x\Vert^2\leq \sum_{i=1}^m\vert\langle x, x_i\rangle\vert^2\leq B\Vert x\Vert^2,  \ \mbox{ for all } x\in \mathbb{F}^n.
	\end{align*}
\end{definition}

 The constants $A$ and $B$ are called the {\bf lower} and {\bf upper frame bounds}, respectively. $X$ is said to be  an {\bf $A$-tight frame} or simply a  {\bf tight frame} if $A=B$  and a {\bf Parseval frame} if $A = B = 1$. A frame is said to be {\bf equal-norm} if all frame elements have the same norm and {\bf unit-norm} if all elements have norm one. The set of all unit-norm tight frames of $m$ vectors for $\mathbb{F}^n$ is denoted by $\FUNTF(m, \mathbb{F}^n)$. It is known that if $X\in \FUNTF(m, \mathbb{F}^n)$, then the frame bound is $A=\tfrac{m}{n}$. The numbers $\{\langle x, x_i\rangle\}_{i=1}^m$ are the called the {\bf frame coefficients} of the vector $x\in \mathbb{F}^n$. It is well-known that $X$ is a frame for $\mathbb{F}^n$ if and only if $X$ spans $\mathbb{F}^n$.
 
 Let $X=\{x_i\}_{i=1}^m$ be a frame for $\mathbb{F}^n$ and let $\{e_i\}_{i=1}^m$ be the standard orthonormal basis for $\ell_2^m$. The bounded linear operator $F: \ell_2^m\to \mathbb{F}^n$ given by
 \[F\left(\sum_{i=1}^{m}a_ie_i\right)=\sum_{i=1}^{m}a_ix_i\] is called the {\bf synthesis operator} of the frame. Its adjoint operator $F^*: \mathbb{F}^n\to \ell_2^m $ is called
 the {\bf analysis operator}. It follows that
 \[F^*(x)=\sum_{i=1}^{m}\langle x, x_i\rangle e_i.\]
 The {\bf frame operator} $S: \mathbb{F}^n\to \mathbb{F}^n$ is defined by
 \[S(x):=FF^*(x)=\sum_{i=1}^{m}\langle x, x_i\rangle x_i.\] 
Note that 
\[\langle Sx, x\rangle=\sum_{i=1}^{m}\vert \langle x,x_i\rangle\vert^2 \ \mbox{ for all } x\in \mathbb{F}^n,\]
and hence, $S$ is a positive, self-adjoint, invertible operator on $\mathbb{F}^n$.  Moreover, if $X$ is an $A$-tight frame, then 
\[\langle Sx, x\rangle=\sum_{i=1}^{m}\absip{x}{x_i}^2=A\Vert x\Vert^2=A\ip{x}{x},  \ \mbox{ for all } 
x\in \mathbb{F}^n.\]
This is equivalent to its frame operator being a multiple of the identity operator, $S=AI$. In this case, we have the following useful reconstruction formula:
\[x=\frac{1}{A}\sum_{i=1}^{m}\ip{x}{x_i}x_i, \ \mbox{ for all } x\in \mathbb{F}^n.\]
In general, reconstruction of vectors in the space is achieved from the formula:
\[x=\sum_{i=1}^{m}\ip{x}{S^{-1}x_i}x_i=\sum_{i=1}^{m}\ip{x}{x_i}S^{-1}x_i.\]
A fundamental theorem in frame theory is Naimark's Theorem \cite{CG} which states that a family of vectors in $\mathbb{F}^n$ is a Parseval frame for $\mathbb{F}^n$ if and only if it is the image of an orthonormal basis under an orthogonal projection $P$ from a larger (containing) Hilbert space. It follows that the projection $(I-P)$ also yields a Parseval frame.

The main property of frames which makes them so useful in applied problems is their redundancy. That is, each vector in the space has infinitely
many representations with respect to the frame but it also has one natural
representation given by the frame coefficients. The role played by redundancy varies with specific applications. First, redundancy provides for greater design flexibility, allowing frames to be constructed which fit a particular problem in a manner not possible by a set of linearly independent vectors. For instance, quantum tomography requires classes of orthonormal bases with the property that the modulus of the inner products of vectors from different bases are a constant. A second example comes from speech recognition, wherein a vector must be determined by the absolute value of the frame coefficients (up to a phase factor). A second major advantage of redundancy is robustness. By spreading the information over a wider range of vectors, resilience against losses (erasures) can be achieved. Erasures are, for instance, a severe problem in wireless sensor networks when transmission losses occur or when sensors are intermittently fading out, or in modeling the brain where memory cells are dying out. A further advantage of spreading information over a wider range of
vectors is to mitigate the effects of noise in a signal.

For an introduction to frame theory we recommend \cite{C, CG, OC, HKLE, W}.

\section{The core of Grassmannian frames}
In this section we develop the central idea of this paper: the core of a Grassmannian frame. We then present some properties of the core.\\

\begin{definition}
	Let $X=\{x_i\}_{i=1}^m\subset \RR^n$ be any collection of unit vectors. For any $Y\subset X$, $x\in Y$, and $\alpha\in [0,1]$, the $\alpha$-\textbf{neighbors} of $x$ in $Y$  are defined by 
	\[x_Y^\alpha:=\{y\in Y:\absip{x}{y}=\alpha\}.\]
	If $X$ is a Grassmannian frame at angle $\alpha$ then $x_X^\alpha$ is also called the set of \textbf{packing neighbors} of $x$. \\
\end{definition}

\begin{definition} 
	Let $X$ be a finite collection of unit vectors in $\RR^n$ with coherence $\alpha$ and let $x\in X$. Then
	\begin{itemize}
		\item	$x$ is an {\bf isolated vector} of $X$ if $|\langle x,y\rangle|<\alpha$ for every other vector $y$ in $X$.
		\item $x$ is an {\bf isolable vector} of $X$ if, for every $\epsilon >0$, there exists a unit vector $x'$ such that $\|x-x'\|<\epsilon$ and $x'$ is an isolated vector of $(X\setminus\{x\})\cup \{x'\}$.
		\item $x$ is a {\bf deficient vector} of $X$ if
		\[\spn(x_X^\alpha)\not=\RR^n.\]
	\end{itemize}
\end{definition}

If $x$ is isolated then obviously it is also isolable and deficient. Less clear is the following and the idea of its proof appeared in \cite{FJM}.
\begin{lemma}\label{lem0} Let $X$ be a collection of unit vectors with coherence $\alpha>0$. Then deficient vectors of $X$ are isolable vectors of $X$.
\end{lemma}
\begin{proof}
	Suppose $x\in X$ is deficient. Then there exists a $z\in (x^\alpha_X)^\perp$ such that $\|z\|=1$ and $\ip{x}{z}\geq 0$.  For any $\epsilon>0$ let  $x'= \frac{1}{\|x+\epsilon z\|}(x+\epsilon z)$. Then for all $y\in x_X^\alpha$, we have
	\[|\langle x', y\rangle|=\left|\left\langle \frac{1}{\|x+\epsilon z\|}(x+\epsilon z), y\right\rangle\right|=\frac{|\langle x, y\rangle|}{\|x+\epsilon z\|}=\frac{\alpha}{\sqrt{1+\epsilon^2+2\epsilon\langle x, z\rangle}}<\alpha.\]
	If $Y=X\setminus(x^\alpha_X\cup\{x\})$ then it remains to show $\absip{x'}{y}<\alpha$ for all $y\in Y$. Let $\sup_{y\in Y}\absip{x}{y}=\delta<\alpha$. Then for $\epsilon < \alpha-\delta$ and $y\in Y$ we have

	\[	\absip{x'}{y}=\left|\left\langle \frac{1}{\|x+\epsilon z\|}(x+\epsilon z), y\right\rangle\right|\leq\frac{\absip{x}{y}+\epsilon}{\|x+\epsilon y\|}<\frac{\delta+\alpha - \delta}{\sqrt{1+\epsilon^2+2\epsilon\ip{x}{z}}}<\alpha.\]
	Therefore $x'$ is an isolated vector of $(X\setminus\{x\})\cup\{x'\}$. It follows that $x$ is isolable.
\end{proof}

The main unsolved problem here is:

\begin{problem}
	Does any Grassmannian frame contain isolated vectors? Equivalently, does any Grassmannian frame contain isolable vectors?
\end{problem}

We believe the answer is yes, however we did not see any such a frame among known Grassmannian frames.

\begin{remark}
	The converse of Lemma \ref{lem0} does not hold. I.e., if $x\in X$ is isolable it need not be deficient.
\end{remark}

\begin{ex} 
\vskip7pt

In $\RR^3$ let
\[ x=(0,0,1),\ y_1=(\sqrt{1-\alpha^2},0,\alpha),\ y_2=(0,\sqrt{1-\alpha^2},\alpha),\]
\[ y_3=(0,-\sqrt{1-\alpha^2},\alpha),\ \mbox{ where } \alpha\in (0,1).\]
Then $|\langle x, y_i\rangle|=\alpha$ for $i=1,2, 3$ and $\mathrm{span}\{y_i\}_{i=1}^{3}=\RR^3$, therefore $x$ is not deficient. However,
if we replace $x$ with $x'=(-\epsilon,0,\sqrt{1-\epsilon^2}), (0<\epsilon<\alpha)$, then
$|\langle x',y_i\rangle|<\alpha$ for $i=1,2,3$. Therefore $x$ is isolable.
\end{ex}

\begin{lemma} \label{lemcore1} Let $X$ be a finite set of unit vectors and $Z$ be the set of islolated vectors of $X$. Let $Y=X\setminus Z$ and denote by $I(Y)$ the set of isolable vectors of $Y$. Then $I(Y) \cup Z$ is the set of isolable vectors of $X$.
\end{lemma}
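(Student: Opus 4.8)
The plan is to establish the two inclusions $I(Y)\cup Z\subseteq I(X)$ and $I(X)\subseteq I(Y)\cup Z$ separately. The one genuine subtlety, which I would isolate first, is that the threshold $\alpha$ in the definition of an isolated vector is the coherence of the \emph{ambient} set, and this coherence could in principle change as we pass among $X$, $Y=X\setminus Z$, and their one-element deletions. So before touching the inclusions I would pin all the relevant thresholds to the single number $\alpha:=\coh(X)$.

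Two facts are immediate. Since $\alpha$ is attained by some pair of vectors and an isolated vector has every inner product strictly below $\alpha$, no member of an $\alpha$-attaining pair lies in $Z$; hence $\coh(Y)=\alpha$. Next, for a fixed $x\in Y$ the finitely many numbers $\absip{x}{z}$ with $z\in Z$ are all $<\alpha$, so $\delta:=\max_{z\in Z}\absip{x}{z}<\alpha$, and by Cauchy--Schwarz any unit $x'$ with $\|x-x'\|<\alpha-\delta$ satisfies $\absip{x'}{z}<\alpha$ for every $z\in Z$; thus the deleted vectors $Z$ never obstruct a sufficiently small perturbation that already works inside $Y$. I would also record the elementary reformulation that a unit vector $x'$ is an isolated vector of a set $V$ exactly when $\max_{y\in V\setminus\{x'\}}\absip{x'}{y}<\coh(V\setminus\{x'\})$, since $\coh(V)$ is the larger of $\coh(V\setminus\{x'\})$ and that maximum.

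The crux -- and the step I expect to be the main obstacle -- is the observation that deleting an isolable vector does not lower the coherence: if $W$ is finite with $\coh(W)=\alpha$ and $x\in W$ is isolable in $W$, then $\coh(W\setminus\{x\})=\alpha$, and consequently $x'$ is isolated in $(W\setminus\{x\})\cup\{x'\}$ precisely when $\absip{x'}{y}<\alpha$ for all $y\in W\setminus\{x\}$. I would argue the contrapositive: if $\coh(W\setminus\{x\})<\alpha$, then every $\alpha$-attaining pair of $W$ must contain $x$, so $x$ has an $\alpha$-neighbor $y\in W$ with $\absip{x}{y}=\alpha$. For $\|x-x'\|<\epsilon$ we then have $\absip{x'}{y}>\alpha-\epsilon$, whereas by the reformulation isolation of $x'$ forces $\absip{x'}{y}<\coh(W\setminus\{x\})<\alpha$; taking $\epsilon<\alpha-\coh(W\setminus\{x\})$ makes these incompatible, so no isolating $x'$ exists and $x$ is not isolable. (The degenerate cases in which a deletion leaves fewer than two vectors are checked directly; there no vector is isolable.) This is exactly what lets me treat the threshold as the fixed number $\alpha$ for every isolable vector that appears.

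Given all this the inclusions are short. For $Z\subseteq I(X)$, each $z\in Z$ is isolated, hence isolable (take $x'=z$). For $I(Y)\subseteq I(X)$, let $x\in I(Y)$; the observation applied to $Y$ gives $\coh(Y\setminus\{x\})=\alpha$, so for a given $\epsilon>0$ I choose an isolating perturbation $x'$ of $x$ in $Y$ with $\|x-x'\|<\min(\epsilon,\alpha-\delta)$, whence $\absip{x'}{y}<\alpha$ both for $y\in Y\setminus\{x\}$ (isolation in $Y$) and for $y\in Z$ (the second fact), so $x'$ is isolated in $(X\setminus\{x\})\cup\{x'\}$ and $x\in I(X)$. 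For $I(X)\subseteq I(Y)\cup Z$, if $x\in Z$ we are done; if $x\in Y$ is isolable in $X$, the observation applied to $X$ gives $\coh(X\setminus\{x\})=\alpha$, and since every extra pair in $X\setminus\{x\}=(Y\setminus\{x\})\cup Z$ involves a vector of $Z$ and is $<\alpha$, we also get $\coh(Y\setminus\{x\})=\alpha$. Any perturbation $x'$ isolating $x$ in $X$ then satisfies $\absip{x'}{y}<\alpha$ for all $y\in X\setminus\{x\}\supseteq Y\setminus\{x\}$, so it isolates $x$ in $(Y\setminus\{x\})\cup\{x'\}$, giving $x\in I(Y)$.
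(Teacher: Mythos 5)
Your proof is correct, and its core mechanism for the main inclusion $I(Y)\subseteq I(X)$ is the same as the paper's: perturb $x$ inside $Y$, then use the triangle inequality to check that the deleted isolated vectors $Z$ — whose inner products with $x$ are bounded strictly below $\alpha$ — cannot obstruct a sufficiently small perturbation. The difference is one of rigor, and it is genuine. The paper's proof consists only of that step (it declares the reverse inclusion $I(X)\subseteq I(Y)\cup Z$ to be the trivial part and never addresses it), and it implicitly equates ``$x'$ is an isolated vector of $(X\setminus\{x\})\cup\{x'\}$'' with ``$\absip{x'}{y}<\coh(X)$ for all $y$''; this equivalence is only valid when $\coh(X\setminus\{x\})=\coh(X)$, because the isolation threshold in the definition is the coherence of the \emph{modified} set, which a priori could drop after the deletion. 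Your key observation — that deleting an isolable vector cannot lower the coherence, proved via the contrapositive — is precisely what licenses this identification, and it also powers your proof of the reverse inclusion (it gives $\coh(X\setminus\{x\})=\coh(Y\setminus\{x\})=\alpha$, so a perturbation isolating $x$ in $X$ automatically isolates it in $Y$). So your write-up supplies two details the paper glosses over: the unproved inclusion and the threshold subtlety. Neither is deep, but both need an argument, and your deletion lemma is the cleanest way to dispose of the threshold issue once and for all; it would make a worthwhile standalone lemma preceding this one.
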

\begin{proof} We only need to show that every vector in $I(Y)$ is an isolable vector of $X$. Let $y\in I(Y)$ any $\epsilon >0$ be such that $\epsilon<\min\{\coh(X)-|\langle y, z\rangle|: z\in Z\}.$ Since $y$ is an isolable vector of $Y$, there exists a unit vector $y'$ such that $\|y-y'\|<\epsilon$ and $|\langle y', x\rangle|<\coh(Y)=\coh(X)$ for all $x\in Y\setminus \{y\}$. 

We now see that we also have $|\langle y', z\rangle|<\coh(X)$ for all $z\in Z$.
Indeed, for each $z\in Z$, we estimate
\[|\langle y', z\rangle|\leq |\langle y'-y, z\rangle|+|\langle y, z\rangle|<\epsilon +|\langle y, z\rangle|<\coh(X).\] This completes the proof.
\end{proof}

\begin{lemma} \label{lemcore2} Let $X$ be a finite set of unit vectors and $I(X)=\{x_i\}_{i=1}^{\ell}$ be the set of isolable vectors of $X$. Then there exists a set of unit vectors, $Z=\{x_i'\}_{i=1}^{\ell}$, such that every $z\in Z$, $|\langle z, x\rangle|<\coh(X)$ for all other vectors $x$ in $(X\setminus I(X)) \cup Z.$
\end{lemma}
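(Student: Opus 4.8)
The plan is to isolate all of the isolable vectors \emph{at once}, moving them one at a time while exploiting the fact that strict inequalities are open conditions, so that each new perturbation can be taken small enough not to undo the isolation already arranged.

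First I would record a reformulation of isolability in terms of $\alpha := \coh(X)$ and $N := X\setminus I(X)$. If a unit vector $x'$ is an isolated vector of $X':=(X\setminus\{x_i\})\cup\{x'\}$, then $\absip{x'}{y}<\coh(X')$ for every $y\in X\setminus\{x_i\}$, so the largest inner product involving $x'$ is strictly below $\coh(X')$; hence the coherence of $X'$ is attained inside $X\setminus\{x_i\}$, giving $\coh(X')=\coh(X\setminus\{x_i\})\le\alpha$. Therefore isolability of $x_i$ supplies, arbitrarily close to $x_i$, a unit vector $x'$ with
\begin{equation}
\absip{x'}{y}<\alpha\qquad\text{for all }y\in X\setminus\{x_i\}. \tag{$\ast$}
\end{equation}

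Next I would construct $Z=\{x_i'\}_{i=1}^{\ell}$ by induction on $i$, keeping each $x_i'$ close to $x_i$. Suppose $x_1',\dots,x_{i-1}'$ have been chosen, each satisfying the analogue of $(\ast)$, namely $\absip{x_k'}{y}<\alpha$ for all $y\in X\setminus\{x_k\}$. The key observation is that this already creates the slack needed to move $x_i$: for each $j<i$, applying $(\ast)$ for $x_j'$ with $y=x_i$ gives the strict inequality $\absip{x_i}{x_j'}<\alpha$. Since there are finitely many such $j$, the map $u\mapsto\max_{j<i}\absip{u}{x_j'}$ is continuous and lies below $\alpha$ at $u=x_i$, so there is a radius $\delta_i>0$ with $\absip{u}{x_j'}<\alpha$ for all $j<i$ whenever $\|u-x_i\|<\delta_i$. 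Using isolability of $x_i$, I choose $x_i'$ with $\|x_i-x_i'\|<\delta_i$ for which $(\ast)$ holds; then $x_i'$ satisfies both $(\ast)$ and $\absip{x_i'}{x_j'}<\alpha$ for every $j<i$. Note that the part of $(\ast)$ controlling the still-unmoved future vectors $x_j$ ($j>i$) is exactly what will furnish the slack $\delta_j$ at those later steps.

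After $\ell$ steps I set $Z=\{x_i'\}_{i=1}^{\ell}$ and fix $z=x_i'\in Z$. Its inner products with the unchanged vectors $N\subseteq X\setminus\{x_i\}$ are bounded by $(\ast)$, giving $\absip{z}{n}<\alpha$ for all $n\in N$; its inner products with the other members $x_j'$ of $Z$ are secured at step $\max(i,j)$ of the construction, using the symmetry $\absip{x_i'}{x_j'}=\absip{x_j'}{x_i'}$, giving $\absip{z}{x_j'}<\alpha$ for all $j\ne i$. Hence $\absip{z}{x}<\coh(X)$ for every other $x\in(X\setminus I(X))\cup Z$, which is the assertion. The heart of the matter, and the only real obstacle, is this \emph{simultaneity}: one cannot take all the $x_i'$ close to their originals at once, since two isolable vectors may be genuine packing neighbors with $\absip{x_i}{x_j}=\alpha$, and perturbing $x_i$ to isolate it could spoil the isolation of some earlier $x_j'$. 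The construction circumvents this by isolating $x_j$ first, producing a strict inequality $\absip{x_j'}{x_i}<\alpha$ against the not-yet-moved $x_i$; this strictness leaves room to move $x_i$ slightly at a later step, and the symmetry of the inner product ensures each pairwise constraint is enforced exactly once, at the larger index.
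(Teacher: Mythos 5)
Your proof is correct and takes essentially the same route as the paper's: an inductive construction in which the strict inequalities $\absip{x_i}{x_j'}<\alpha$ (inherited from the already-isolated vectors $x_j'$ against the not-yet-moved $x_i$) create the slack, and isolability lets you choose $x_i'$ within that slack — your continuity radius $\delta_i$ is just a repackaging of the paper's explicit choice $\epsilon<\min_{j<i}\bigl(\alpha-\absip{x_i}{x_j'}\bigr)$ combined with the triangle inequality. The only difference is that you spell out why isolation in $(X\setminus\{x_i\})\cup\{x'\}$ gives inequalities strictly below $\alpha=\coh(X)$ itself rather than below $\coh\bigl((X\setminus\{x_i\})\cup\{x'\}\bigr)$, a point the paper leaves implicit.
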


\begin{proof} Let $\alpha=\coh(X)$. By definition of isolable vectors, we can find a unit vector $x'_1$ such that $|\langle x'_1, y\rangle|<\alpha$ for all $y\in X\setminus \{x_1\}$. Let $X_1:=(X\setminus \{x_1\})\cup \{x_1'\}$ and let $\epsilon >0$ be such that $\epsilon < \alpha - |\langle x_2, x_1'\rangle|$. Since $x_2$ is an isolable vector of $X$, there exists a unit vector $x_2'$ such that 
\[\|x_2-x_2'\|<\epsilon \mbox{ and } |\langle x_2', y\rangle|<\alpha \mbox{ for all } y\in X\setminus \{x_2\}.\] We now see that we also have $|\langle x_2', x_1'\rangle|<\alpha$.
Indeed, we estimate
\[|\langle x_2', x_1'\rangle|\leq |\langle x'_2-x_2, x_1'\rangle|+|\langle x_2, x_1'\rangle|<\epsilon +|\langle x_2, x_1'\rangle|<\alpha.\] Thus, we have shown that there exists a unit vector $x_2'$ such that $|\langle x_2', y\rangle|<\alpha$ for all $y\in X_1\setminus \{x_2\}$.

Now let $X_2:=(X_1\setminus \{x_2\})\cup \{x_2'\}$ and let $\epsilon > 0$ be such that $$\epsilon <\min\{\alpha - |\langle x_3, x_1'\rangle|, \alpha - |\langle x_3, x_2'\rangle|\}.$$
Then there exists a unit vector $x_3'$ such that 
\[|\langle x_3', y\rangle|< \alpha \mbox{ for all } y\in X_3:=X_2\setminus \{x_3\}.\]

Continuing this process, eventually we have a sequence of $\ell$ unit vectors, $\{x_i'\}_{i=1}^\ell$, as in the claim.
\end{proof}

To define the core of a Grassmannian frame for $\RR^n$ at angle $\alpha>0$ we first need to see that the frame must contain at least $n+1$ non-isolable vectors.

\begin{theorem} \label{thmcore1} Let $X$ be a Grassmannian frame for $\RR^n$ at angle $\alpha$ and let $I(X)$ be the set of all isolable vectors of $X$. Then there exists a set of unit vectors $Z$ of size $|I(X)|$ such that $X':=(X\setminus I(X)) \cup Z$ is a Grassmannian frame for $\RR^n$ at angle $\alpha$, and every $z\in Z$ is an isolated vector of $X'$. Moreover, if $\alpha>0$, then $X\setminus I(X)$ contains at least $n+1$ (non-isolable) vectors.
\end{theorem}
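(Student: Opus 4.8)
The plan is to handle the two assertions separately: the first is essentially a repackaging of Lemma \ref{lemcore2}, while the counting bound $|X\setminus I(X)|\ge n+1$ is the substantive part, which I would reach by a perturbation-versus-minimality argument.

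For the existence of $Z$, I would apply Lemma \ref{lemcore2} with $\alpha=\coh(X)=\alpha_{m,n}$. It furnishes unit vectors $Z=\{x_i'\}_{i=1}^{|I(X)|}$ with $|\langle z,x\rangle|<\alpha$ for every $z\in Z$ and every other $x$ in $X':=(X\setminus I(X))\cup Z$; by definition this says each $z$ is isolated in $X'$. Since the inner products among $X\setminus I(X)$ are inherited unchanged from $X$ and every inner product involving a vector of $Z$ is strictly below $\alpha$, we get $\coh(X')\le\alpha$; as $X'$ is again a family of $m$ unit vectors in $\RR^n$, minimality gives $\coh(X')\ge\alpha_{m,n}=\alpha$, so $\coh(X')=\alpha$ and $X'$ is a Grassmannian frame at angle $\alpha$. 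Moreover, the value $\alpha$ cannot be attained by any pair meeting $Z$, so it is attained by a pair lying entirely inside $W:=X\setminus I(X)$; hence $|W|\ge 2$ and $\coh(W)=\alpha$ as a standalone set.

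For the bound, I would argue by contradiction: assume $|W|=k\le n$ and build a small perturbation of $X'$ of coherence strictly below $\alpha$, which is impossible since $X'$ already achieves the minimal coherence $\alpha_{m,n}$. The perturbation moves only the vectors of $W$, fixing $Z$. Let $G=(\langle w_i,w_j\rangle)$ be the $k\times k$ Gram matrix of $W$ (positive semidefinite, unit diagonal, with $\max_{i\ne j}|G_{ij}|=\alpha$) and set $G_t:=(1-t)G+tI_k$. Then $G_t$ is positive semidefinite with unit diagonal and off-diagonal entries $(1-t)G_{ij}$, so $\max_{i\ne j}|(G_t)_{ij}|=(1-t)\alpha<\alpha$. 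Because $G_t$ has rank at most $k\le n$, it is the Gram matrix of unit vectors $W_t=\{w_i(t)\}\subset\RR^n$, and these can be chosen to depend continuously on $t$ with $w_i(0)=w_i$ (when $W$ is linearly independent one may take $A_t=AG^{-1/2}G_t^{1/2}$, with $A$ the matrix whose columns are the $w_i$; the dependent case follows from the same continuous-realization principle).

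Setting $X'_t:=W_t\cup Z$, the internal coherence of $W_t$ is $(1-t)\alpha<\alpha$; the inner products between $W_t$ and $Z$ converge as $t\to0$ to those between $W$ and $Z$, which are strictly below $\alpha$, so they stay below $\alpha$ for small $t$; and the inner products within $Z$ are unchanged and already below $\alpha$. Thus $\coh(X'_t)<\alpha$ for small $t>0$, while $X'_t$ is a family of $m$ unit vectors in $\RR^n$ and so must satisfy $\coh(X'_t)\ge\alpha_{m,n}=\alpha$, a contradiction. Therefore $|W|\ge n+1$. The main obstacle is precisely the realization step: one must ensure the Gram-matrix interpolation is realized by vectors forming a genuinely small perturbation of the original $W$, so that the $W$--$Z$ inner products remain below $\alpha$; this is where the continuity of the factorization $G_t\mapsto W_t$ and the strict separation of the isolated vectors $Z$ are both needed.
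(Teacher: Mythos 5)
Your proof of the first assertion follows the paper's own argument essentially verbatim: invoke Lemma \ref{lemcore2}, use minimality over $m$-element unit-norm sets to get $\coh(X')\ge\alpha$, and observe that no pair meeting $Z$ attains the coherence, so $\coh(X')=\coh(X\setminus I(X))\le\alpha$. For the bound $|X\setminus I(X)|\ge n+1$, however, you take a genuinely different route. The paper stays inside its isolability machinery: if $|X\setminus I(X)|\le n$, each of its vectors is deficient in that set, hence isolable by Lemma \ref{lem0} (this is where $\alpha>0$ is used); Lemma \ref{lemcore1} then makes every vector of $X'$ isolable in $X'$, and one more application of Lemma \ref{lemcore2} replaces all of them by a configuration of $m$ unit vectors with coherence strictly below $\alpha$, contradicting minimality. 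You instead shrink the Gram matrix: $G_t=(1-t)G+tI_k$ is positive semidefinite with unit diagonal and off-diagonal moduli at most $(1-t)\alpha<\alpha$, and --- this is where $k\le n$ enters --- being $k\times k$ it is realizable by unit vectors in $\RR^n$; a continuous realization with $W_0=W$ keeps the $W_t$--$Z$ inner products below $\alpha$ for small $t>0$, again contradicting minimality. Both arguments are correct and both exploit the same phenomenon (at most $n$ vectors can be perturbed simultaneously to strictly reduce all their mutual inner products); the paper's version recycles lemmas it has already proved and needs no matrix analysis, while yours is self-contained, quantitative (an explicit $(1-t)$ shrinkage), and makes the role of the dimension count completely transparent. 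One technical point needs repair: your factorization $A_t=AG^{-1/2}G_t^{1/2}$ breaks down when $W$ is linearly dependent ($G$ singular), and "the same continuous-realization principle" deserves justification rather than assertion. The clean fix: since $k\le n$, a thin singular value decomposition gives $A=VG^{1/2}$ with $V^{T}V=I_k$, and then $A_t:=VG_t^{1/2}$ satisfies $A_t^{T}A_t=G_t$ and $A_t\to A$ as $t\to 0$ by continuity of the square root on positive semidefinite matrices; with this substitution your argument is complete in all cases.
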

\begin{proof}  Let $Z$ be the set as in Lemma \ref{lemcore2}. Since $X'=(X\setminus I(X))\cup Z$ has the same size as $X$, we must have $\coh(X')\ge \alpha=\coh(X)$. Note that every $z\in Z$, $|\langle z, x\rangle|<\alpha\leq \coh(X')$ for all other vectors in $X'$. So $Z$ is a set of isolated vectors of $X'$. Now let $x,y\in X'$ be such that $\absip{x}{y}=\coh(X')$. Since every $z\in Z$ is an isolated vector of $X'$, it follows that $x,y\in X\setminus I(X)$. Therefore $\coh(X')=\coh( X\setminus I(X))\leq\coh(X)$. Thus $\coh(X')=\coh(X)$ and $X'$ is Grassmannian. 

To see the last claim, we proceed by way of contradiction, so we assume that $|X\setminus I(X)|\le n$. Hence, every $x$ in $X\setminus I(X)$ is a deficient vector of this set. Because $\alpha>0$, by Lemma \ref{lem0}, every $x$ in $X\setminus I(X)$ is an isolable vector of $X\setminus I(X)$. By Lemma \ref{lemcore1}, every vector in $X'$ is an isolable vector of $X'$. But then by Lemma $\ref{lemcore2}$, there exists a set of unit vectors of the same size of $X'$ whose coherence is strictly less than $\alpha$, a contradiction. 
\end{proof}

\begin{theorem}\label{thmcore2} Let $X=\{x_i\}_{i=1}^m$ be a Grassmannian frame for $\RR^n$ at angle $\alpha$ and $I(X)$ be the set of all isolable vectors of $X$. Let $Y_1:=X\setminus I(X)$ and for each $k\geq 2$, we define a set $Y_k:=Y_{k-1}\setminus I(Y_{k-1})$, where $I(Y_{k-1})$ is the set of all isolable vectors of $Y_{k-1}$. Then 
$\cap_{k=1}^\infty Y_k \not=\emptyset$. 
\end{theorem}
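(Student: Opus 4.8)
The plan is to reduce the statement to the finiteness of $X$ together with a repeated application of Theorem \ref{thmcore1}. First I would observe that the construction gives a nested sequence of finite sets $Y_1 \supseteq Y_2 \supseteq \cdots$, since $Y_{k+1} = Y_k \setminus I(Y_k) \subseteq Y_k$. Because these are all subsets of the finite set $X$, the cardinalities $|Y_k|$ form a non-increasing sequence of nonnegative integers and hence are eventually constant; together with the nesting this forces $Y_k = Y_K$ for all $k \ge K$ and some $K$. Consequently $\bigcap_{k=1}^\infty Y_k = Y_K$, and the whole theorem reduces to showing that this stable set is nonempty. It therefore suffices to prove that $Y_k \ne \emptyset$ for every $k$.

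To get nonemptiness with room to spare, I would prove by induction the stronger statement that each $Y_k$ arises as the non-isolable part of a Grassmannian frame of $m$ vectors for $\RR^n$ at angle $\alpha$; the last claim of Theorem \ref{thmcore1} (recall $\alpha>0$) then yields $|Y_k| \ge n+1$ for all $k$. Precisely, I would construct Grassmannian frames $X^{(k)}$ at angle $\alpha$ with $X^{(1)} = X$ and the invariant $X^{(k)} \setminus I(X^{(k)}) = Y_k$. The base case is immediate from the definition $Y_1 = X \setminus I(X)$. For the inductive step, apply Theorem \ref{thmcore1} to $X^{(k)}$: it produces a set $Z_k$ of isolated vectors such that $X^{(k+1)} := (X^{(k)} \setminus I(X^{(k)})) \cup Z_k = Y_k \cup Z_k$ is again Grassmannian at angle $\alpha$, and it guarantees $|Y_k| = |X^{(k)} \setminus I(X^{(k)})| \ge n+1$.

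The crux of the induction, and the step I expect to be the main obstacle, is verifying that the invariant propagates, i.e.\ that $X^{(k+1)} \setminus I(X^{(k+1)}) = Y_{k+1}$. Here I would compute $I(X^{(k+1)})$ via Lemma \ref{lemcore1}. The key point is that the freshly inserted vectors $Z_k$ are isolated in $X^{(k+1)}$, so for a vector $y \in Y_k$ the quantities $\absip{y}{z}$ with $z \in Z_k$ are all strictly below $\alpha$ and hence cannot affect whether $y$ is isolated or isolable in $X^{(k+1)}$; thus the isolated vectors of $X^{(k+1)}$ are exactly $Z_k$ together with the isolated vectors of $Y_k$, and Lemma \ref{lemcore1} (applied first to $X^{(k+1)}$ and then to $Y_k$) collapses to $I(X^{(k+1)}) = I(Y_k) \cup Z_k$. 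Subtracting gives $X^{(k+1)} \setminus I(X^{(k+1)}) = Y_k \setminus I(Y_k) = Y_{k+1}$, closing the induction.

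Putting the pieces together, $|Y_k| \ge n+1$ for every $k$, so the stable set $Y_K = \bigcap_{k=1}^\infty Y_k$ has at least $n+1$ elements and in particular is nonempty. The delicate bookkeeping is entirely in the propagation of the invariant; once that is in hand the conclusion follows from the eventual stabilization of a nested sequence of finite sets.
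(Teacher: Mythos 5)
Your proposal is correct and follows essentially the same route as the paper: an induction using Theorem \ref{thmcore1} together with Lemma \ref{lemcore1} to keep each $Y_k$ realized as the non-isolable part of a Grassmannian frame $Y_k\cup Z_k$ at angle $\alpha$ (hence $|Y_k|\ge n+1$), followed by stabilization of the nested finite sets. The only difference is one of exposition: the paper states this induction in a single sentence, while you spell out the propagation step $I(X^{(k+1)})=I(Y_k)\cup Z_k$ (and, implicitly, the trivial case $\alpha=0$, where $I(X)=\emptyset$) that the paper leaves to the reader.
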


\begin{proof} By Theorem \ref{thmcore1} and Lemma \ref{lemcore1}, for each $k \geq 1$, there exists a set of unit vectors $Z_k$ such that $X_k:= Y_k\cup Z_k$ is a Grassmannian frame of $m$ vectors for $\RR^n$ at angle $\alpha$, and every vector in $Z_k$ is an islolated vector of $X_k$. Since $Y_k\not=\emptyset$ and $Y_{k+1}\subset Y_k$ for all $k$, it follows that there is an index $p$ such that $Y_k=Y_p$ for all $k\geq p$. The conclusion follows. 
\end{proof}

\begin{definition} Let $X$ be a Grassmannian frame for $\RR^n$ at angle $\alpha$, and 
let $\{Y_k\}_{k\in \NN}$ be the sequence of sets defined as in Theorem \ref{thmcore2}. Then the set $\cap_{k=1}^\infty Y_k$ is called the {\bf core} of $X$ and is denoted by $C(X)$.  
\end{definition}

The following is an important property of the core.
\begin{theorem}\label{thm2} 
	Let $X$ be a Grassmannian frame for $\RR^n$ at angle $\alpha$ and let $C(X)$ be its core. Then the following hold:
	\begin{enumerate}
		\item If $\alpha=0$, then $C(X)=X$.
                  \item $C(X)$ has no isolable vectors. In particular, if $X$ is not an orthonormal basis, then every $x\in C(X)$, $\spn(x^\alpha_{C(X)}) =\RR^n$. Consequently, $|C(X)|\geq n+1$.  Moreover, if $Y$ is a subset of $X$ which has no isolable vectors, then $Y\subset C(X)$. Moreover, if $Y$ is a subset of $X$ which has no isolable vectors and $\coh(Y)=\alpha$, then $Y\subset C(X)$
	\end{enumerate}
\end{theorem}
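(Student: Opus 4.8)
The plan is to prove the three assertions of Theorem~\ref{thm2} in order, drawing on the machinery established in Lemma~\ref{lem0} through Theorem~\ref{thmcore2}. For part (1), when $\alpha=0$ the frame is orthonormal-like: no two vectors have nonzero inner product, so every vector is isolated. I first observe that an isolated vector has $x_X^\alpha=\emptyset$, hence $\spn(x_X^\alpha)=\{0\}\ne\RR^n$, so it is deficient; but I cannot invoke Lemma~\ref{lem0} here since that requires $\alpha>0$. Instead, I would argue directly that when $\alpha=0$ there are no isolable vectors at all—any perturbation $x'$ of a vector $x$ must still satisfy $|\langle x',y\rangle|<0$ for all $y$, which is impossible since inner products are nonnegative in absolute value. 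Thus $I(X)=\emptyset$, every $Y_k=X$, and $C(X)=X$.

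**For the first half of part (2)**, that $C(X)$ has no isolable vectors, the key is that $C(X)=\cap_k Y_k=Y_p$ for the stabilizing index $p$ from Theorem~\ref{thmcore2}, where $Y_{p+1}=Y_p\setminus I(Y_p)=Y_p$ forces $I(Y_p)=\emptyset$. So $C(X)$ has no isolable vectors by construction. To get the spanning conclusion, I would assume $X$ is not an orthonormal basis (so $\alpha>0$) and take any $x\in C(X)$. If $\spn(x_{C(X)}^\alpha)\ne\RR^n$, then $x$ is deficient in $C(X)$, and since $\alpha>0$, Lemma~\ref{lem0} (applied to the set $C(X)$, whose coherence is $\alpha$) shows $x$ is isolable in $C(X)$—contradicting that $C(X)$ has no isolable vectors. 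Hence $\spn(x_{C(X)}^\alpha)=\RR^n$, and since the packing neighbors of $x$ must span $n$ dimensions and $x$ itself has $|\langle x,y\rangle|=\alpha<1$ with each such $y$ distinct from $x$, we need at least $n$ neighbors plus $x$, giving $|C(X)|\ge n+1$.

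**For the final two sentences** (the two ``Moreover'' clauses about subsets $Y$ with no isolable vectors), I would show such a $Y$ survives every stage of the filtration, i.e.\ $Y\subset Y_k$ for all $k$ by induction on $k$. The base case $Y\subset Y_1=X\setminus I(X)$ requires showing no vector of $Y$ is isolable in $X$; the inductive step requires that no vector of $Y$ is isolable in $Y_{k-1}$. The core difficulty, and where I expect the main obstacle, is the relationship between being isolable in the small set $Y$ versus in the larger ambient set $Y_{k-1}$: a vector non-isolable within $Y$ might a priori become isolable once fewer constraining neighbors are present, or the coherences $\coh(Y)$ and $\coh(Y_{k-1})$ might differ. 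The second ``Moreover'' clause adds the hypothesis $\coh(Y)=\alpha$ precisely to control this—it aligns the relevant coherences so that a vector isolable in the ambient set would, via the perturbation argument of Lemma~\ref{lem0}/\ref{lemcore2}, also be isolable within $Y$. I would carefully track that adding a vector back to a set can only increase (never decrease) the coherence and can only make isolation harder, and conclude that a non-isolable-in-$Y$ vector cannot be removed at any stage, so $Y\subset\cap_k Y_k=C(X)$.
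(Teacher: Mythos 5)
Your proposal is correct and takes essentially the same route as the paper's own proof: part (1) from the impossibility of strict inequality below coherence $0$, the no-isolable-vectors and spanning claims from the stabilization $Y_{p+1}=Y_p$ of the filtration together with Lemma \ref{lem0}, and the subset claims by showing $Y$ survives each stage because a vector of $Y$ that is isolable in an ambient set $Y_{k-1}\supset Y$ of matching coherence is already isolable in $Y$ itself. You even make explicit the coherence-alignment point (why the hypothesis $\coh(Y)=\alpha$ is needed, and why $\coh(Y_{k-1})=\alpha$ follows) that the paper's proof passes over silently.
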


\begin{proof} 
	(1): If $\alpha=0$ then $X$ is an orthonormal basis and by definition of the core, $C(X)=X$.


(2): By definition of the core, it has no isolable vectors. The spanning property of the core follows from Lemma \ref{lem0}. Now suppose that $Y$ is a subset of $X$, $\coh(Y)=\alpha$, and $Y$ has no isolable vectors. Let $Y_k$ be the set defined as in Theorem \ref{thmcore2}, $k\geq 1$. We will show that $Y\subset Y_k$ for all $k$, and so $Y\subset C(X)$. 

To see this, note that if $A$ is any set that contains $Y$ and $I(A)$ is the set of all isolable vectors of $A$, then $Y\subset A\setminus I(A)$. This is because if $Y\cap I(A)\not=\emptyset$, then $Y$ contains an isolable vector of $A$, which is also an isolable vector of $Y$, a contradiction. Now by definition of the sets $Y_k$, we have that $Y\subset Y_k$ for all $k$. This completes the proof.
\end{proof}

\begin{theorem} 
	If $X$ is an ETF, then $C(X)=X$.
\end{theorem}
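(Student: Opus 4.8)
The plan is to prove the equivalent statement that an ETF $X=\{x_i\}_{i=1}^m$ has no isolable vectors; the theorem then follows at once. Indeed, if $I(X)=\emptyset$ then the sets of Theorem \ref{thmcore2} satisfy $Y_1=X\setminus I(X)=X$ and hence, inductively, $Y_k=X$ for every $k$, so $C(X)=\bigcap_{k=1}^\infty Y_k=X$. (Alternatively, apply Theorem \ref{thm2}(2) with $Y=X$, using $\coh(X)=\alpha$, to get $X\subset C(X)\subset X$.) The case $\alpha=0$ is the orthonormal basis case already settled by Theorem \ref{thm2}(1), so I may assume $\alpha>0$, equivalently $m>n$.

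Assume for contradiction that some $x\in X$ is isolable. Unwinding the definition of isolable together with isolatedness, this produces a unit vector $x'$ with $\absip{x'}{y}<\alpha$ for every $y\in X\setminus\{x\}$ (for an ETF the pairs inside $X\setminus\{x\}$ already pin the relevant coherence at $\alpha$, so isolatedness of $x'$ is exactly this bound on its cross inner products). I will only use the existence of one such $x'$, not its proximity to $x$.

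I then feed $x'$ into the tight frame norm identity. Since $X\in\FUNTF(m,\RR^n)$ has frame bound $\tfrac{m}{n}$,
\[\frac{m}{n}=\sum_{i=1}^m\absip{x'}{x_i}^2=\absip{x'}{x}^2+\sum_{y\in X\setminus\{x\}}\absip{x'}{y}^2.\]
Now $\absip{x'}{x}^2\le\|x'\|^2\|x\|^2=1$ by Cauchy--Schwarz, while each of the $m-1$ remaining terms is strictly below $\alpha^2$. Substituting the Welch equality $\alpha^2=\tfrac{m-n}{n(m-1)}$, valid for an ETF by Theorem \ref{Welch}, bounds the right-hand side strictly by $1+(m-1)\alpha^2=1+\tfrac{m-n}{n}=\tfrac{m}{n}$, giving the absurdity $\tfrac{m}{n}<\tfrac{m}{n}$.

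The crux is that for an ETF the total energy $\sum_i\absip{x'}{x_i}^2=\tfrac{m}{n}$ is allocated with no slack: the $m-1$ cross terms consume exactly $\tfrac{m-n}{n}$ when each equals $\alpha^2$, leaving precisely $1$ for the self term. Forcing any cross inner product strictly below $\alpha$ must therefore push $\absip{x'}{x}^2$ strictly above $1$, which is impossible. I anticipate no real obstacle beyond this tight-budget bookkeeping and the harmless separation of the degenerate cases $m\le n$.
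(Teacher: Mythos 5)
Your proof is correct, but it extracts the contradiction by a genuinely different mechanism than the paper. Both arguments reduce the theorem to showing that an ETF has no isolable vectors, and both begin by unwinding isolability to produce a unit vector $x'$ with $\absip{x'}{y}<\alpha$ for all $y\in X\setminus\{x\}$. At that point the paper forms the perturbed set $X'=(X\setminus\{x\})\cup\{x'\}$, observes that it is still a Grassmannian frame at angle $\alpha$ (which equals the Welch bound) yet is visibly not equiangular, and invokes the equality clause of Theorem \ref{Welch} --- i.e.\ the converse direction, that coherence equal to the Welch bound forces an ETF --- to reach a contradiction. You never form $X'$ at all: you feed $x'$ into the tight-frame identity $\sum_{i=1}^m\absip{x'}{x_i}^2=\tfrac{m}{n}$, bound the self term by Cauchy--Schwarz and each of the $m-1$ cross terms strictly by $\alpha^2=\tfrac{m-n}{n(m-1)}$, and obtain $\tfrac{m}{n}<\tfrac{m}{n}$. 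This needs only the tightness of $X$ and the easy (forward, computational) direction of Welch equality, and it sidesteps any question of whether the perturbed set is a frame, is Grassmannian, or attains the bound; in effect you re-run the computation of Lemma \ref{lem2} in reverse, which makes your argument the more self-contained of the two, while the paper's is shorter given the full strength of Theorem \ref{Welch}. The only blemish is the degenerate case $m=2$, $n=1$ (the sole ETF with $\alpha>0$ and fewer than three vectors): there $X\setminus\{x\}$ contains no pairs, so your parenthetical justification that those pairs ``pin the relevant coherence at $\alpha$'' is vacuous --- though in that case isolability is outright impossible (any two unit vectors in $\RR^1$ have inner product of magnitude $1$), so nothing actually breaks.
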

\begin{proof} Let $\alpha$ be the angle of $X$. The conclusion is obvious if $X$ is an orthonormal basis. Now let us consider the case where $\alpha>0$. We proceed by way of contradiction so we assume that $C(X)\not=X$. Thus, $X$ must have an isolable vector. So we can replace this vector by another unit vector whose inner procduct with all other vectors of $X$, in magnitude, is strictly less than $\alpha$ and still have a Grassmannian frame for $\RR^n$. But this is impossible since by Theorem \ref{Welch}, any Grassmannian frame at angle $\alpha$ must be an ETF as well.
\end{proof}

\section{Some other properties of Grassmannian frames}
In this section, we will present some other properties of Grassmannian frames. We start with a lemma.

\begin{lemma}\label{lem2}
	Let $X=\{x_i\}_{i=1}^m$ be a unit-norm, tight frame for $\mathbb{R}^n$ with coherence $\alpha$. Then the following are equivalent.
	\begin{enumerate}
		\item $x_X^\alpha = X\setminus\{x\}$ for some $x\in X$.
		\item $X$ is an ETF.
	\end{enumerate}
\end{lemma}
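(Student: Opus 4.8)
The plan is to prove the two implications separately, with the substance lying in $(1)\Rightarrow(2)$. The reverse direction $(2)\Rightarrow(1)$ is immediate: if $X$ is an ETF then by definition every pair of distinct vectors has inner product of the same magnitude, which must equal the coherence $\alpha$; hence for \emph{any} $x\in X$ we have $\absip{x}{y}=\alpha$ for all $y\neq x$, i.e.\ $x_X^\alpha=X\setminus\{x\}$.

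For $(1)\Rightarrow(2)$ I would exploit the tight-frame hypothesis evaluated at the distinguished vector. Since $X$ is a unit-norm tight frame for $\RR^n$, its frame operator satisfies $S=\tfrac{m}{n}I$, so the tightness identity $\langle Sx,x\rangle=\sum_{i=1}^m\absip{x}{x_i}^2$ gives, for the special vector $x$ in hypothesis (1),
\[
\frac{m}{n}=\frac{m}{n}\norm{x}^2=\langle Sx,x\rangle=\sum_{i=1}^m\absip{x}{x_i}^2.
\]
I would then split off the self-term $\absip{x}{x}^2=1$ from the remaining $m-1$ terms. By hypothesis (1) each of those equals $\alpha^2$, so the right-hand side collapses to $1+(m-1)\alpha^2$, yielding $\tfrac{m}{n}=1+(m-1)\alpha^2$, equivalently $\alpha=\sqrt{\tfrac{m-n}{n(m-1)}}$.

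This shows the coherence of $X$ achieves the Welch bound exactly. Invoking Theorem \ref{Welch}, equality in the Welch bound for a unit-norm frame holds if and only if $X$ is an equiangular tight frame, which is precisely conclusion (2).

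There is no serious obstacle here; the only point requiring a little care is the realization that the tightness relation applied to a \emph{frame vector itself} (rather than an arbitrary test vector) converts the local information in (1)---that a single vector $x$ has all others as packing neighbors---into the global scalar identity $\tfrac{m}{n}=1+(m-1)\alpha^2$. Once one sees that this identity is exactly saturation of the Welch bound, the equiangularity of the whole frame follows from Theorem \ref{Welch} without any further computation. I would also remark that this argument uses only unit norms and tightness for the single chosen vector, making the hypothesis in (1) quite economical.
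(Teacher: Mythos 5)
Your proposal is correct and follows essentially the same route as the paper: apply the tight-frame identity $\sum_{i=1}^m\absip{x}{x_i}^2=\tfrac{m}{n}\norm{x}^2$ at the distinguished vector $x$, split off the self-term to get $1+(m-1)\alpha^2=\tfrac{m}{n}$, observe that this is exactly equality in the Welch bound, and invoke Theorem \ref{Welch} to conclude that $X$ is an ETF. The paper likewise dismisses $(2)\Rightarrow(1)$ as trivial, so there is nothing to add.
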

\begin{proof}
	We will only prove $(1)$ implies $(2)$, as the converse is trivial.	Suppose $x_X^\alpha=X\setminus\{x\}$ for some $x\in X$. Then
	\[1+(m-1)\alpha^2=\sum_{i=1}^m\absip{x}{x_i}^2=A\|x\|^2=\frac{m}{n}.\] Hence $\alpha$ equals the Welch bound, $\sqrt{\frac{m-n}{n(m-1)}}.$ The conclusion follows.
\end{proof}

\begin{proposition} \label{Prop_tight Grass} Let $X=\{x_i\}_{i=1}^{m}$ be a tight Grassmannian frame for $\RR^n$ at angle $\alpha$. If $X$ is not an ETF, then the following hold:
\begin{enumerate}
\item For every $x\in X$, we have $|x_X^\alpha|\leq m-2$.
\item If $m$ is odd, then in addition to (1) there exists $x\in X$ such that $|x_X^\alpha|\leq m-3$.
\end{enumerate}
\end{proposition}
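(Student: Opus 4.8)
The plan is to read (1) directly off Lemma \ref{lem2}, and to derive (2) from (1) by an elementary parity argument on the graph of packing neighbors. Since $x_X^\alpha$ is by definition a subset of $X\setminus\{x\}$, we always have $|x_X^\alpha|\le m-1$, with equality exactly when $x_X^\alpha=X\setminus\{x\}$. For a unit-norm tight frame, Lemma \ref{lem2} tells us that this equality for some $x$ forces $X$ to be an ETF. As $X$ is assumed tight but not an ETF, the equality case is excluded for every $x\in X$, so $|x_X^\alpha|\le m-2$. This settles (1), and I expect it to be immediate once Lemma \ref{lem2} is invoked.

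For (2) I would model the packing structure as a simple graph $G$ on the vertex set $X$, declaring $x$ and $y$ adjacent precisely when $\absip{x}{y}=\alpha$; because real inner products are symmetric this relation is symmetric, so $G$ is well defined and $\deg_G(x)=|x_X^\alpha|$. Passing to the complement graph $\overline{G}$, part (1) gives $\deg_{\overline{G}}(x)=(m-1)-|x_X^\alpha|\ge 1$ for every $x$, so $\overline{G}$ has no isolated vertices. Now argue by contradiction: if no vector satisfied $|x_X^\alpha|\le m-3$, then by (1) every vector would have exactly $m-2$ packing neighbors, i.e.\ $\deg_{\overline{G}}(x)=1$ for all $x$, making $\overline{G}$ a $1$-regular graph, equivalently a perfect matching on $m$ vertices. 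A perfect matching needs an even number of vertices; equivalently, the handshake lemma forces the degree sum $m$ to be even. Since $m$ is odd this is impossible, so some $x\in X$ must satisfy $|x_X^\alpha|\le m-3$, which is exactly (2).

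There is no serious analytic obstacle here: all of the substantive content is packaged inside Lemma \ref{lem2}, and what remains is combinatorics. The one point that genuinely needs part (1) is the observation that $\overline{G}$ has minimum degree at least one; without ruling out an isolated vertex in $\overline{G}$ (equivalently a vector with $|x_X^\alpha|=m-1$), the parity count on the complement could be broken, so the tightness hypothesis enters precisely through this minimum-degree guarantee. Beyond that, the argument is a clean degree-sum parity check, and I would expect it to go through without complication.
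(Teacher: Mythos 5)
Your proposal is correct and follows essentially the same route as the paper: part (1) is read off directly from Lemma \ref{lem2}, and part (2) is the same parity argument, since your handshake-lemma count on the complement graph is exactly the paper's observation that the off-diagonal Gram matrix entries differing from $\pm\alpha$ (one per row, hence $m$ of them) must come in symmetric pairs and so be even in number. The graph-theoretic phrasing is just a repackaging of the Gram-matrix symmetry, not a different argument.
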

\begin{proof} (1):  $X$ is not an ETF, therefore (1) follows from Lemma \ref{lem2}.

(2): Proceeding by contradiction, suppose  $|x_X^\alpha|= m-2$ for every $x\in X$. Then each row of the Gram matrix of $X$ has exactly one off-diagonal entry different from $\pm \alpha$. Since the Gram matrix is symmetric, there must be an even number of such entries. But $m$ is odd, a contradiction. The result follows.
\end{proof}

Given a Grassmannian frame $X$, we have seen the spanning property of vectors in the core. The following gives another spanning property of Grassmannian frames.

\begin{theorem}\label{thm-span}
	Let $X=\{x_i\}_{i=1}^m$ be a Grassmannian frame for $\mathbb{R}^n$, $(m>n)$. Then for any $j\in [m]$, the set
	$\{x_i: i\not= j\}$ spans $\mathbb{R}^n$.
\end{theorem}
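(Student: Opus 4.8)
The plan is to argue by contradiction and reduce everything to the core machinery already developed. Suppose for some $j$ the set $\{x_i : i\neq j\}$ fails to span $\RR^n$, and let $V=\spn\{x_i : i\neq j\}$. Since $X$ is a frame it spans $\RR^n$, and adjoining the single vector $x_j$ to $V$ can raise the dimension by at most one; hence $\dim V=n-1$, so $V$ is a hyperplane and $x_j\notin V$. The point of this reduction is that all pairwise interaction among the $x_i$ with $i\neq j$ lives inside the hyperplane $V$, while $x_j$ sticks out of it, and I will exploit this degeneracy to manufacture too many isolable vectors.

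First I would \emph{detach} $x_j$. Let $w$ be a unit normal to $V$ and set $X':=\{x_i : i\neq j\}\cup\{w\}$, a family of $m$ unit vectors. Since $w\perp V$ we have $\langle w, x_i\rangle=0$ for every $i\neq j$, so $\coh(X')=\coh(\{x_i : i\neq j\})\le\coh(X)=\alpha$. On the other hand $X'$ spans $\RR^n$ (the $x_i$ span $V$ and $w$ spans $V^\perp$), so $X'$ is a competitor in the line-packing problem and minimality of $X$ forces $\coh(X')\ge\alpha$. Hence $\coh(X')=\alpha$ and $X'$ is itself a Grassmannian frame at angle $\alpha$. Note also that $\alpha>0$, since $m>n$ rules out $m$ pairwise-orthogonal unit vectors in $\RR^n$.

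Next I would show that \emph{every} vector of $X'$ is isolable, which is the heart of the argument. The vector $w$ satisfies $|\langle w, x_i\rangle|=0<\alpha$ for all $i\neq j$, so $w$ is an isolated, hence isolable, vector of $X'$. For each remaining $x_i$, its packing neighbors in $X'$ are the vectors of $X'$ making angle $\alpha$ with it; since $|\langle x_i, w\rangle|=0\neq\alpha$, the vector $w$ is not among them, so every packing neighbor of $x_i$ lies in $\{x_k : k\neq j\}\subseteq V$. Therefore $\spn((x_i)^\alpha_{X'})\subseteq V\subsetneq\RR^n$, i.e. $x_i$ is deficient, and by Lemma \ref{lem0} (using $\alpha>0$) it is isolable. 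Thus $I(X')=X'$, so $X'\setminus I(X')=\emptyset$. But Theorem \ref{thmcore1} guarantees that a Grassmannian frame at angle $\alpha>0$ has at least $n+1$ non-isolable vectors, a contradiction.

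The main obstacle is precisely the detachment step: working with $X$ directly does not suffice, because $x_j\notin V$ could well be a packing neighbor of some $x_i$, in which case $\spn((x_i)^\alpha_X)$ might be all of $\RR^n$ and $x_i$ would not be forced to be deficient. Replacing $x_j$ by the normal $w$ is exactly what removes this possibility while preserving the angle, and the one point that genuinely needs care is verifying that this replacement leaves the coherence equal to $\alpha$, so that Theorem \ref{thmcore1} really applies to $X'$.
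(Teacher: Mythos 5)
Your proposal is correct and is essentially the paper's own proof: both replace $x_j$ by a unit vector orthogonal to $\spn\{x_i : i\neq j\}$, observe that the resulting $m$ unit vectors still form a Grassmannian frame at angle $\alpha$, and reach a contradiction because every vector of the new frame is isolated or deficient (hence isolable, via Lemma \ref{lem0}), so its core is empty, violating Theorem \ref{thmcore1}. You simply make explicit the details the paper compresses into the single sentence ``the core of this frame is empty,'' including the needed facts that $\alpha>0$ and that the replacement does not change the coherence.
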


\begin{proof} Suppose by way of contradiction that there exists $j\in [m]$ such that $\spn\{x_i\}_{i\neq j}\neq \RR^n$. Let $z$ be a unit vector which is orthogonal to $\spn\{x_i\}_{i\neq j}$. Note that the set $\{x_i\}_{i\not=j}\cup \{z\}$ is also a Grassmannian frame. But the core of this frame is empty, a contradiction.
\end{proof}

\begin{remark}  The conclusion in Theorem \ref{thm-span} is the best possible since there are examples of Grassmannian frames which no longer span if 2 vectors are removed. One such example is a Grassmannian frame of 3 vectors in $\RR^2$. A less trivial example is the following:
\end{remark}
\begin{ex} Let $X$ be a frame for $\RR^4$ where the frame vectors are the columns of the matrix:
\[ \frac{1}{\sqrt{3}}\begin{bmatrix}
1&1&1&1&1&1\\
\sqrt{2}&-\sqrt{2}&0&0&0&0\\
0&0&\sqrt{2}&-\sqrt{2}&0&0\\
0&0&0&0&\sqrt{2}&-\sqrt{2}
\end{bmatrix}\]
This is an equiangular frame at angle $1/3$ for $\RR^4$. It is known that the Grassmannian angle of $6$ vectors in $\RR^4$ is $1/3$ \cite{BC}. So this is an equiangular Grassmannian frame for $\RR^4$. It is clear that we cannot drop the first two vectors leaving a spanning set for the space.\\
\end{ex}

For a given pair of positive integers, $(m, n)$, finding the Grassmannian angle, $\alpha_{m,n}$, is an extremely difficult problem. In a few cases, such as $m=n$ or $m=n+1$, calculating $\alpha_{m,n}$ is trivial or relatively straightforward. However, even for small values of $m$ and $n$, calculating $\alpha_{m,n}$ explicitly can be impossible \cite{FJM}. For this reason, most known Grassmannian frames are constructed in such a way that the coherence agrees with established bounds such as the Welch bound, or the orthoplex bound. Recently, exact values of Grassmannian angles were computed in  \cite{BC}:
\begin{itemize} 
	\item  If $n\equiv -2$ (mod 3) and $m=n+2$, then $\alpha_{m,n}=\frac{3}{2n+1}$.
	\item If $n\equiv -3$ (mod 6) and $m=n+3$, then $\alpha_{m,n}=\frac{6}{(\sqrt{5}+1)n+3(\sqrt{5}-1)}$.
	\item If $n\equiv -7$ (mod 28) and $m=n+7$, then $\alpha_{m,n}=\frac{14}{5n+21}$.
	\item If $n\equiv -23$ (mod 276) and $m=n+23$, then $\alpha_{m,n} = \frac{69}{14n+253}$.
\end{itemize}

We now give comparision of the Grassmmanian angles for different pairs $(m, n)$. 
\begin{proposition} For any natural numbers $m> n\geq 2$, the following inequalities hold:
	\begin{enumerate}
		\item $\alpha_{m,n}\leq \alpha_{m+1, n}$.
                   \item $\alpha_{m+1, n+1}<\alpha_{m, n}$.	
		\item $\alpha_{m,n+1}< \alpha_{m,n}$.	
	\end{enumerate}
\end{proposition}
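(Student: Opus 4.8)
The plan is to prove the three bounds by comparing optimal configurations across the relevant parameter changes, with the monotonicity in $n$ being the substantive part.

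First, (1) is immediate by restriction. I would take an $(m+1,n)$-Grassmannian frame $X$, which has coherence $\alpha_{m+1,n}$, and delete any single vector. The remaining $m$ unit vectors in $\RR^n$ have coherence at most $\alpha_{m+1,n}$ (a maximum over a smaller index set), while by definition this coherence is at least $\alpha_{m,n}$. Chaining the two inequalities gives $\alpha_{m,n}\le\alpha_{m+1,n}$.

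For (2) and (3) the embedding $\RR^n\hookrightarrow\RR^{n+1}$, $x\mapsto(x,0)$, only yields the non-strict bounds $\alpha_{m,n+1}\le\alpha_{m,n}$ and $\alpha_{m+1,n+1}\le\alpha_{m,n}$ directly, so the real work is upgrading these to strict inequalities. The key observation is that once a Grassmannian frame is embedded into a strictly larger space, every vector becomes deficient. Concretely, let $X=\{x_i\}_{i=1}^m$ be an $(m,n)$-Grassmannian frame at angle $\alpha=\alpha_{m,n}$; since $m>n$, the $x_i$ cannot be pairwise orthogonal, so $\alpha>0$. Viewing each $x_i$ as $(x_i,0)\in\RR^{n+1}$, the packing neighbors of $(x_i,0)$ in the embedded family still lie in $\RR^n\times\{0\}$, so $\spn$ of those neighbors is a proper subspace of $\RR^{n+1}$. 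Hence every embedded vector is deficient, and by Lemma \ref{lem0} (using $\alpha>0$) every embedded vector is isolable.

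With this in hand, (3) follows by applying Lemma \ref{lemcore2} to the embedded family, whose set of isolable vectors is the whole family: the lemma produces $m$ unit vectors $Z\subset\RR^{n+1}$ all of whose pairwise inner products are strictly below $\alpha$, so $\coh(Z)<\alpha$ and therefore $\alpha_{m,n+1}\le\coh(Z)<\alpha_{m,n}$. For (2) I would first adjoin the extra basis vector $e_{n+1}=(0,\dots,0,1)$ to the embedded family, obtaining $m+1$ unit vectors in $\RR^{n+1}$ of coherence $\alpha$; since $\alpha>0$, $e_{n+1}$ is orthogonal to every $(x_i,0)$ and is thus isolated, hence isolable, while each $(x_i,0)$ remains deficient, hence isolable as above. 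Thus all $m+1$ vectors are isolable, and Lemma \ref{lemcore2} again returns $m+1$ unit vectors in $\RR^{n+1}$ of coherence strictly below $\alpha$, giving $\alpha_{m+1,n+1}<\alpha_{m,n}$. The main obstacle, and the only step requiring the machinery developed above rather than a one-line counting argument, is precisely this passage from $\le$ to $<$: the deficiency-after-embedding observation is what makes the entire embedded configuration isolable and lets the replacement procedure of Lemma \ref{lemcore2} drive the coherence strictly down. As a consistency check one can verify the edge case $m=n+1$ of (3), where $\alpha_{m,n+1}=\alpha_{n+1,n+1}=0<1/n=\alpha_{n+1,n}$.
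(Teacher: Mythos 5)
Your proof is correct, and it diverges from the paper's in how it certifies strictness in (2) and (3). Part (1) is identical to the paper's (drop a vector). For (2) the paper uses the same construction as you — embed the optimal $(m,n)$ configuration as $\RR^n\times\{0\}\subset\RR^{n+1}$ and adjoin $e_{n+1}$ — but then finishes in one line by citing Theorem \ref{thm-span}: if $X'=X\cup\{e_{n+1}\}$ were Grassmannian for $\RR^{n+1}$, deleting $e_{n+1}$ would have to leave a spanning set, which it does not; hence $X'$ is not optimal and $\alpha_{m+1,n+1}<\coh(X')=\alpha_{m,n}$. The paper then gets (3) for free from (1) and (2) via $\alpha_{m,n+1}\le\alpha_{m+1,n+1}<\alpha_{m,n}$, whereas you prove (3) directly. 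Your route instead unwinds the machinery that sits behind Theorem \ref{thm-span}: every embedded vector is deficient because its packing neighbors lie in the hyperplane $\RR^n\times\{0\}$, hence isolable by Lemma \ref{lem0} (your justification that $\alpha>0$ when $m>n$ is needed here and is correct), $e_{n+1}$ is isolated hence isolable, and since the entire configuration is isolable, Lemma \ref{lemcore2} produces the same number of unit vectors with strictly smaller coherence, contradicting optimality. This is a valid, self-contained argument — in effect you re-prove exactly the special case of Theorem \ref{thm-span} that is needed (the paper's proof of that theorem also runs through the core machinery built on Lemmas \ref{lem0} and \ref{lemcore2}) — at the cost of repeating an argument the paper has already packaged; what the paper's version buys is modularity and brevity, while yours makes explicit why non-optimality holds without invoking the core theory's consequences.
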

\begin{proof} 
	(1): If $X$ is a Grassmannian frame of $m+1$ vectors for $\RR^n$, then dropping one vector of $X$ yields a set of $m$ vectors in $\RR^n$ with coherence less than or equal to $\alpha_{m+1, n}$. This implies that $\alpha_{m, n}\le \alpha_{m+1, n}$.

(2): Let $X$ be a Grassmannian frame of $m$ vectors for $\RR^n$ and consider the frame $X'=X\cup\{e_{n+1}\}\subset \RR^{n+1}$, where $e_{n+1}$ is the canonical basis vector. 
	By Theorem \ref{thm-span}, $X'$ is not Grassmannian for $\RR^{n+1}$. Since $\coh(X')=\alpha_{m, n}$, it follows that $\alpha_{m+1,n+1}<\alpha_{m,n}$.

	(3): This is a direct consequence of (1) and (2) sicce $\alpha_{m,n+1}\leq \alpha_{m+1,n+1}<\alpha_{m, n}.$	
\end{proof}

\begin{remark} (1) is still valid if $m=n$. Moreover, it is possible for equality to hold in (1). For example, the Grassmannian angles for both 5 and 6 vectors in $\RR^3$ are equal \cite{BK}.
\end{remark}

\begin{theorem}
	Let $X=\{x_i\}_{i=1}^m$ be a unit-norm frame for $\RR^n$ and $\lambda$ be the largest eigenvalue of its frame operator with multiplicity $k$. Then there is a set of unit vectors $Y=\{y_i\}_{i=1}^m$ in $\RR^{m-k}$ such that 
	\[\langle y_i, y_j\rangle =\frac{1}{1-\lambda} \langle x_i, x_j\rangle, \  \mbox{ for all } i\not=j.\]
	In particular, if $X$ is a Grassmannian frame at angle $\alpha$, then 
	\[\coh(Y) = \frac{\alpha}{\lambda -1}\mbox{ and } \alpha \geq (\lambda -1)\sqrt{\frac{k}{(m-k)(m-1)}}.\]\end{theorem}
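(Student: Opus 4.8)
The plan is to work at the level of the Gram matrix rather than the vectors themselves. Let $G=F^*F\in\RR^{m\times m}$ be the Gram matrix of $X$, so that $G_{ii}=\norm{x_i}^2=1$ and $G_{ij}=\langle x_i,x_j\rangle$ for $i\neq j$. Since $G=F^*F$ and $S=FF^*$ share the same nonzero eigenvalues with the same multiplicities, and $X$ spans $\RR^n$ so $S$ is invertible, the spectrum of $G$ consists of the $n$ (positive) eigenvalues of $S$ together with $0$ repeated $m-n$ times. In particular $\lambda$ is the largest eigenvalue of $G$, with multiplicity $k$. I would then introduce
\[
M:=\lambda I_m-G,
\]
and read off its eigenvalues from those of $G$: they are $\lambda-\mu$ as $\mu$ ranges over the spectrum of $G$. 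This gives the value $0$ exactly $k$ times (from the top eigenspace of $G$) and positive values $(n-k)+(m-n)=m-k$ times. Hence $M$ is positive semidefinite with $\rk(M)=m-k$.

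Next I would factor $M=Y^\top Y$ with $Y\in\RR^{(m-k)\times m}$ of full row rank, for instance via the spectral decomposition $M=UDU^\top$ and $Y=D^{1/2}U^\top$ restricted to the range of $M$. Writing $y_1,\dots,y_m\in\RR^{m-k}$ for the columns of $Y$, we have $\langle y_i,y_j\rangle=M_{ij}$, so $\norm{y_i}^2=\lambda-1$ for every $i$ and $\langle y_i,y_j\rangle=-\langle x_i,x_j\rangle$ for $i\neq j$. Because $m>n$ forces $\lambda\geq m/n>1$, the quantity $\lambda-1$ is positive, so I may normalize $\tilde y_i:=y_i/\sqrt{\lambda-1}$. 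Each $\tilde y_i$ is a unit vector, and for $i\neq j$,
\[
\langle \tilde y_i,\tilde y_j\rangle=\frac{M_{ij}}{\lambda-1}=\frac{-\langle x_i,x_j\rangle}{\lambda-1}=\frac{1}{1-\lambda}\langle x_i,x_j\rangle,
\]
which is exactly the claimed identity; I set $Y=\{\tilde y_i\}_{i=1}^m$. Since $Y$ has full row rank, the $\tilde y_i$ span $\RR^{m-k}$, so $Y$ is genuinely a unit-norm frame for $\RR^{m-k}$ and not for a smaller space.

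For the Grassmannian case, the identity above immediately gives $\coh(Y)=\max_{i\neq j}\abs{\langle x_i,x_j\rangle}/(\lambda-1)=\alpha/(\lambda-1)$. Applying the Welch bound (Theorem \ref{Welch}) to the unit-norm frame $Y$ of $m$ vectors for $\RR^{m-k}$ yields $\coh(Y)\geq\sqrt{k/((m-k)(m-1))}$, and substituting $\coh(Y)=\alpha/(\lambda-1)$ and rearranging produces $\alpha\geq(\lambda-1)\sqrt{k/((m-k)(m-1))}$. I expect the only genuinely delicate points to be the eigenvalue bookkeeping that pins down $\rk(M)=m-k$ exactly (so the vectors live in the stated dimension), and the accompanying full-row-rank claim that guarantees $Y$ spans $\RR^{m-k}$ so that the Welch bound applies with ambient dimension $m-k$; both follow from the shared-spectrum relationship between $G$ and $S$. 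The sign and the positivity $\lambda>1$ needed for the normalization should be recorded explicitly, but are routine.
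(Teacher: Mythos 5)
Your proof is correct, but it takes a genuinely different route from the paper's. The paper's argument completes $X$ to a $\lambda$-tight frame by appending $n-k$ auxiliary vectors (a standard frame-completion result), rescales by $1/\sqrt{\lambda}$ to obtain a Parseval frame of $m+n-k$ vectors, realizes that frame via Naimark's theorem as $\{Pe_i\}$ for an orthonormal basis $\{e_i\}$ of $\ell_2^{m+n-k}$, and then takes the normalized complementary projections $(I-P)e_i$, $i\in[m]$; the identity $\langle (I-P)e_i,(I-P)e_j\rangle=-\langle Pe_i,Pe_j\rangle$ for $i\neq j$ produces the sign flip and the factor $\frac{1}{1-\lambda}$. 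You instead work directly with the Gram matrix $G=F^*F$, note that $M=\lambda I_m-G$ is positive semidefinite of rank exactly $m-k$ (via the shared nonzero spectrum of $F^*F$ and $FF^*$), and factor $M=Y^\top Y$. The two constructions yield the same object up to scaling and an orthogonal change of coordinates: the Gram matrix of the paper's vectors $\{(I-P)e_i\}_{i\in[m]}$ is $I_m-\tfrac{1}{\lambda}G=\tfrac{1}{\lambda}M$. What your route buys is self-containment and precision: it needs neither the frame-completion lemma nor Naimark's theorem, and the rank bookkeeping pins down the ambient dimension $m-k$ exactly and shows the $y_i$ genuinely span $\RR^{m-k}$ (the paper's proof leaves spanning unaddressed, though it is not strictly needed, since the Welch bound holds for any unit vectors in $\RR^{m-k}$). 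What the paper's route buys is the frame-theoretic interpretation of $Y$ as a Naimark-type complement, consistent with the machinery used elsewhere in the paper. One point worth recording explicitly in your write-up: $\lambda>1$, needed both for the normalization and for the statement itself to be meaningful, holds precisely when $X$ is not an orthonormal basis; your appeal to $m>n$ covers the Grassmannian application, and the paper leaves this implicit as well.
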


\begin{proof}
	A standard result in frame theory says we can add vectors $\{z_i\}_{i=1}^{n-k}$ to $\{x_i\}_{i=1}^m$ so that together they form a
	$\lambda$-tight frame.  Multiplying the vectors by $\frac{1}{\sqrt{\lambda}}$ we get a Parseval frame $\{x_i'\}_{i=1}^{m+n-k}$, where
	\[ x_i'=\frac{1}{\sqrt{\lambda}}x_i\mbox{ for }i\in [m], \mbox{ and }\] 
	\[ x_i'=\frac{1}{\sqrt{\lambda}}z_i\mbox{ for }i=m+1,m+2,\ldots,m+n-k.\]
	So there is an orthonormal basis $\{e_i\}_{i=1}^{m+n-k}$ for $\ell_2^{m+n-k}$
	so that the projection onto the span of our Parseval frame satisfies:
	\[ Pe_i=x_i' \mbox{ for all }i\in [m].\]
	Note that $\{(I-P)e_i\}_{i=1}^{m+n-k}$ is a Parseval frame for $\RR^{m-k}$, and 
	\[\|(I-P)e_i\|^2 = 1-\|Pe_i\|^2 = 1 -\frac{1}{\lambda} \mbox{ for all }i\in [m].\]
	Now let \[ y_i=\frac{1}{\sqrt{1-\frac{1}{\lambda}}}(I-P)e_i, \mbox{ for } i\in [m]\]
	we have that $ \|y_i\|=1$ and 
\begin{align*}\langle y_i,y_j\rangle&=\frac{1}{1-\frac{1}{\lambda}}\langle (I-P)e_i,(I-P)e_j\rangle\\
&=\frac{-1}{1-\frac{1}{\lambda}}\langle Pe_i,Pe_j\rangle\\
&=\frac{-1}{1-\frac{1}{\lambda}}\frac{1}{\lambda}\langle x_i, x_j\rangle\\
&=\frac{1}{1-\lambda} \langle x_i, x_j\rangle.
\end{align*}
	for all $i, j \in [m], i\not=j$.
	
	Now assume that $X$ is a Grassmannian frame at angle $\alpha$, then 
	\[\max_{i\not=j}|\langle y_i, y_j\rangle|=\frac{1}{\lambda -1}\max_{i\not=j}|\langle x_i, x_j\rangle|=\frac{\alpha}{\lambda-1}.\]
	Moreover, by Theorem \ref{Welch} we have that
	\[\frac{\alpha}{\lambda-1}\geq \sqrt{\frac{m-(m-k)}{(m-k)(m-1)}}\] or \[\alpha\geq (\lambda-1)\sqrt{\frac{k}{(m-k)(m-1)}}.\]
The proof is complete.
\end{proof}

\begin{theorem}
Let $X=\{x_i\}_{i=1}^m$ be a Grassmannian frame for $\RR^n$ at angle $\alpha$ and let $\{e_i\}_{i=1}^n$ be the eigenvectors of the frame operator with
respective eigenvalues $\lambda_1\ge \lambda_2 \ge \cdots \ge \lambda_n$. Assume $\lambda_1$ is the largest eigenvalue among all $(m, n)$-Grassmannian frames. For each $x\in X$, let
\[ F_x=\{x\}\cup x_X^\alpha.\]
Then, $e_1\in \spn{F_x}$ for all $x\in X$.
\end{theorem}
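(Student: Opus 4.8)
The plan is to argue by contradiction and exploit the maximality of $\lambda_1$. Suppose $e_1\notin\spn(F_x)$ for some $x\in X$. I will perturb the single vector $x$, keeping it unit-norm, to a vector $x'$ so that the new family $X'=(X\setminus\{x\})\cup\{x'\}$ is still an $(m,n)$-Grassmannian frame at angle $\alpha$, yet its frame operator $S'$ satisfies $\ip{S'e_1}{e_1}>\lambda_1$. Since the top eigenvalue of $S'$ is at least $\ip{S'e_1}{e_1}$, this contradicts the hypothesis that $\lambda_1$ is the largest eigenvalue occurring among all $(m,n)$-Grassmannian frames. We may assume $\alpha>0$, i.e. $m>n$; otherwise the statement is degenerate.

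First I would fix the geometry. Write $V=\spn(x_X^\alpha)$ and $W=V^\perp$, with orthogonal projections $P_V,P_W$. Since $P_Vx\in V$, one checks $\spn(F_x)=V+\RR\,P_Wx$, so the $W$-component of every vector of $\spn(F_x)$ lies in $\RR\,P_Wx$. Hence the assumption $e_1\notin\spn(F_x)$ is equivalent to $P_We_1\notin\spn(P_Wx)$, and this produces a unit vector $v\in W$ with $v\perp x$ (equivalently $v\perp P_Wx$) and $\ip{e_1}{v}\neq 0$; this $v$ is the perturbation direction. The degenerate configurations $P_Wx=0$ (that is, $x\in V$) and $x_X^\alpha=\emptyset$ are covered by the same statement and should be verified explicitly.

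Next I would set $x'_\epsilon=\frac{1}{\sqrt{1+\epsilon^2}}(x+\epsilon v)$, which is a unit vector because $v\perp x$, and let $X'=(X\setminus\{x\})\cup\{x'_\epsilon\}$. Because $v\in W=\spn(x_X^\alpha)^\perp$, every packing neighbor $y\in x_X^\alpha$ satisfies $\absip{x'_\epsilon}{y}=\alpha/\sqrt{1+\epsilon^2}<\alpha$; the finitely many non-neighbors $y$ have $\absip{x}{y}<\alpha$, so $\absip{x'_\epsilon}{y}<\alpha$ once $\epsilon$ is small; and all pairs avoiding $x$ are unchanged. Thus $\coh(X')=\coh(X\setminus\{x\})\le\alpha$, and since $\alpha=\alpha_{m,n}$ is the minimal possible coherence we must in fact have $\coh(X')=\alpha$. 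By Theorem \ref{thm-span} the set $X\setminus\{x\}$ already spans $\RR^n$, so $X'$ spans and is genuinely an $(m,n)$-Grassmannian frame.

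For the eigenvalue comparison, writing $S'$ for the frame operator of $X'$ gives $\ip{S'e_1}{e_1}=\lambda_1-\ip{e_1}{x}^2+\ip{e_1}{x'_\epsilon}^2$, so it is enough to arrange $\absip{e_1}{x'_\epsilon}>\absip{e_1}{x}$. Expanding $\ip{e_1}{x'_\epsilon}=(\ip{e_1}{x}+\epsilon\,\ip{e_1}{v})/\sqrt{1+\epsilon^2}$ and using $\ip{e_1}{v}\neq 0$, a short computation achieves this for a suitable sign of $v$ and small $\epsilon>0$ (the first-order term $2\,\ip{e_1}{x}\ip{e_1}{v}$ drives the increase when $\ip{e_1}{x}\neq0$, while if $\ip{e_1}{x}=0$ the value simply moves off zero). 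This completes the contradiction. I expect the main obstacle to be the middle step: verifying that the perturbation leaves the family Grassmannian. The delicate point is that shrinking the inner products of $x$ with its neighbors cannot push the global coherence below $\alpha$ — minimality of $\alpha$ forces some pair within $X\setminus\{x\}$ to continue realizing $\alpha$ — and that the direction $v$ can genuinely be produced from $e_1\notin\spn(F_x)$ in every degenerate case. This is also the reason the argument cannot apply to a core vector, for which $x_X^\alpha$ spans $\RR^n$, $W=\{0\}$, and no such $v$ exists.
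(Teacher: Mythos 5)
Your proposal is correct and follows essentially the same route as the paper's own proof: both arguments perturb $x$ along a unit vector orthogonal to $\spn F_x$ having nonzero inner product with $e_1$ (your $v$ is the paper's $z$), observe that the perturbed family remains an $(m,n)$-Grassmannian frame at angle $\alpha$ via the Lemma~\ref{lem0}-type estimate, and then show $\ip{S'e_1}{e_1} > \lambda_1$ to contradict the maximality of $\lambda_1$. Your write-up is merely more explicit on points the paper glosses over (the spanning of $X'$ via Theorem~\ref{thm-span}, the sign choice for the cross term, and passing from the Rayleigh quotient at $e_1$ to the top eigenvalue of $S'$), but the underlying argument is identical.
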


\begin{proof}
Suppose by way of contradiction that there exists $x\in X$ such that $e_1\notin F_{x}$. Choose $\|z\|=1$ with $z\perp \spn F_x$ and $\langle z,e_1\rangle \not= 0$. Assume $x=ae_1+u$ with $u\perp e_1$. 
By replacing z with $-z$ and $x$ with $-x$ if necessary, we may assume 
\[ \langle z,e_1\rangle >0 \mbox{ and } \langle x,e_1\rangle \ge 0.\]
By Lemma \ref{lem0}, we can find a unit vector $x'$ such that $|\langle x',  y\rangle|<\alpha$ for all $y\in X\setminus \{x\}$, where $x'=\frac{x+\epsilon z}{\|x+\epsilon z\|}$ for some $\epsilon >0$. We choose $\epsilon$ small enough so that we also have $a\epsilon < 2\langle z, e_1\rangle$. Let $Y=(X\setminus \{x\}) \cup \{x'\}$, then $Y$ is also a Grassmannian frame at angle $\alpha$.

Now we can check that if $a\epsilon < 2\langle z, e_1\rangle$, then
\[|\langle x', e_1\rangle|^2=\frac{(\epsilon\langle z, e_1\rangle+a)^2}{1+\epsilon^2}>a^2.\]
Therefore,
\[ \sum_{y\in  X, y\not=x}|\langle y, e_1\rangle|^2+|\langle x',e_1\rangle|^2 >  \sum_{y\in  X, y\not=x}|\langle y, e_1\rangle|^2+a^2=\sum_{x\in X}|\langle x,e_1\rangle|^2=\lambda_1^2.
\]
It follows that the largest eigenvalue of the new frame is strictly greater than the largest eigenvalue of the old frame.
This contradicts our assumption.

\end{proof}

\section{1-Grassmannian frames}

A finite frame which is unit-norm and tight is called a FUNTF.
In addition to Grassmannian frames, one can consider the class of
frames which minimize coherence over the space of FUNTFs. Solutions to this problem are called  1-{\it Grassmannian frames} \cite{CH}. It is known that these two types of frames coincide in many settings, but not all. One noteworthy advantage of working with 1-Grassmannian frames is that their optimality properties are preserved under the Naimark complement.

Let $\mu_{m, n}$ be the coherence of a $1$-Grassmanian frame of $m$ vectors for $\RR^n$. By definition, we have that $\mu_{m, n}\geq \alpha_{m, n}$. Similar to $\alpha_{m, n}$, the exact value of $\mu_{m, n}$ is generally unknown and difficult to compute. The value of $\mu_{m,n}$ is derived below for $m=n+2$; however, we first require a theorem from \cite{CH}.

\begin{theorem}\label{thm_NM} If a $1$-Grassmannian frame in $ \FUNTF(m, \mathbb{F}^n)$ has coherence $\mu_{m, n}$, then a $1$-Grassmanian frame in $\FUNTF(m, \mathbb{F}^{m-n})$ exists, and its coherence is $\mu_{m, m-n}=\frac{n}{m-n}\mu_{m, n}$.
\end{theorem}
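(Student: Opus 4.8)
The plan is to realize the desired frame as the Naimark complement of the given $1$-Grassmannian frame, and then to establish optimality by exploiting the fact that the complement operation is, up to a constant rescaling, an involution on FUNTFs whose effect on coherence is multiplication by a factor and its reciprocal.

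First I would feed the hypothesis into the Naimark-complement construction established in the preceding theorem. Since $X=\{x_i\}_{i=1}^m\in\FUNTF(m,\mathbb F^n)$, its frame operator is $\tfrac{m}{n}I$, so it has the single eigenvalue $\lambda=\tfrac{m}{n}$ with multiplicity $n$. Applying the construction with these parameters produces a set $Y=\{y_i\}_{i=1}^m$ of unit vectors in $\mathbb F^{m-n}$ satisfying $\langle y_i,y_j\rangle=\tfrac{1}{1-\lambda}\langle x_i,x_j\rangle=-\tfrac{n}{m-n}\langle x_i,x_j\rangle$ for $i\neq j$. Because $Y$ is a single constant rescaling of the Parseval frame $\{(I-P)e_i\}_{i=1}^m$ for $\mathbb F^{m-n}$, it is again tight, while $\|(I-P)e_i\|^2=1-\tfrac{n}{m}$ forces each $y_i$ to be a unit vector after rescaling; hence $Y\in\FUNTF(m,\mathbb F^{m-n})$. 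Taking absolute values gives $\coh(Y)=\tfrac{n}{m-n}\coh(X)=\tfrac{n}{m-n}\mu_{m,n}$, which already settles the existence assertion and exhibits a FUNTF with the claimed coherence.

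It then remains to show $Y$ is optimal, i.e. $\mu_{m,m-n}=\coh(Y)$. One inequality, $\mu_{m,m-n}\le\coh(Y)=\tfrac{n}{m-n}\mu_{m,n}$, is immediate from the definition of $\mu_{m,m-n}$ as the minimal coherence over $\FUNTF(m,\mathbb F^{m-n})$, since $Y$ is one such frame. For the reverse inequality I would argue by contradiction: suppose some $W\in\FUNTF(m,\mathbb F^{m-n})$ has $\coh(W)<\tfrac{n}{m-n}\mu_{m,n}$. Applying the same complement construction to $W$, now with the ambient dimension of the complement equal to $m-(m-n)=n$, yields a frame $W'\in\FUNTF(m,\mathbb F^{n})$ whose coherence is scaled by the reciprocal factor, so that $\coh(W')=\tfrac{m-n}{n}\coh(W)<\tfrac{m-n}{n}\cdot\tfrac{n}{m-n}\mu_{m,n}=\mu_{m,n}$. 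This contradicts the minimality of $\mu_{m,n}$ over $\FUNTF(m,\mathbb F^{n})$, so no such $W$ exists and $\coh(Y)=\mu_{m,m-n}=\tfrac{n}{m-n}\mu_{m,n}$.

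The step requiring the most care is the optimality argument, which hinges on the reciprocal nature of the complement: running the construction in dimension $m-n$ scales coherence by $\tfrac{m-n}{n}$, exactly the inverse of the factor $\tfrac{n}{m-n}$ obtained in dimension $n$, and it is the pairing of these two scalings that pins down $\mu_{m,m-n}$ exactly rather than merely bounding it. Accordingly I would verify explicitly that the complement of any FUNTF is again a FUNTF — tightness because a Parseval complement rescaled by a single constant remains tight, and unit norm because of the identity $\|(I-P)e_i\|^2=1-\tfrac{n}{m}$ — and that the coherence scaling holds in both directions; the underlying arithmetic is already contained in the preceding Naimark-complement theorem. A minor point is to confirm that all inner-product identities and the projection argument hold verbatim over $\mathbb F=\CC$ as well as $\RR$, which they do, so the statement is valid over both fields.
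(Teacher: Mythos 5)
Your proof is correct, but there is nothing in the paper to compare it against: Theorem \ref{thm_NM} is imported from \cite{CH} and stated without proof, so your argument is a genuine reconstruction rather than a restatement. What you propose is the natural (and, as far as the cited literature goes, the standard) route, and it is essentially self-contained given the paper's own Naimark-complement theorem in Section 4: for $X\in\FUNTF(m,\mathbb{F}^n)$ the frame operator is $\tfrac{m}{n}I$, so $\lambda=\tfrac{m}{n}$ with multiplicity $k=n$, the construction adds no auxiliary vectors, all $m$ complement vectors share the norm $\sqrt{1-\tfrac{n}{m}}$, and rescaling by the single constant $\bigl(1-\tfrac{n}{m}\bigr)^{-1/2}$ preserves tightness, so the complement is a genuine FUNTF in $\mathbb{F}^{m-n}$ whose off-diagonal Gram entries are those of $X$ scaled by $-\tfrac{n}{m-n}$. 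Your two-sided optimality argument is the key added content and is sound: one direction is the trivial bound $\mu_{m,m-n}\le\coh(Y)$, and the reverse follows because complementing any $W\in\FUNTF(m,\mathbb{F}^{m-n})$ scales coherence by the reciprocal factor $\tfrac{m-n}{n}$, so a hypothetical better packing in dimension $m-n$ would beat $\mu_{m,n}$ in dimension $n$. Note that this contradiction step simultaneously proves the existence assertion, since it shows $Y$ itself attains the minimum. The two caveats you flag are exactly the right ones: the Section 4 theorem is stated for $\RR^n$ and for non-tight frames requires appending vectors, but in the tight case and over $\CC$ the computation $\langle (I-P)e_i,(I-P)e_j\rangle=-\langle Pe_i,Pe_j\rangle$ goes through verbatim.
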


\begin{theorem} For any natural number $n\ge 2$, we have $\mu_{n+2, n}=\frac{2}{n}\cos\frac{\pi}{n+2}$.
\end{theorem}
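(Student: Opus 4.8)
The plan is to use the Naimark-complement relation of Theorem \ref{thm_NM} to push the problem down to the planar case $\RR^2$, where the optimal configuration can be written down explicitly. Setting $m=n+2$, the complementary dimension is $m-n=2$, and Theorem \ref{thm_NM} gives $\mu_{n+2,2}=\frac{n}{2}\mu_{n+2,n}$. Hence it suffices to show $\mu_{n+2,2}=\cos\frac{\pi}{n+2}$ and then solve $\mu_{n+2,n}=\frac{2}{n}\mu_{n+2,2}$. In this way the entire problem reduces to determining the minimal coherence of a unit-norm tight frame of $N:=n+2\geq 4$ vectors in $\RR^2$.

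For the planar computation I would establish both bounds separately. Identify each unit vector with an angle $\phi_k\in[0,\pi)$, so that $\absip{v_j}{v_k}=|\cos(\phi_j-\phi_k)|$. For the lower bound, observe that the pigeonhole principle applied to the $N$ angles on the circle $\RR/\pi\ZZ$ of circumference $\pi$ forces two of them to lie within angular gap at most $\pi/N$; since $N\geq 4$ this gap is below $\pi/2$, so the corresponding inner product has magnitude at least $\cos\frac{\pi}{N}$. As this holds for \emph{every} family of $N$ unit vectors, in particular for every member of $\FUNTF(N,\RR^2)$, we conclude $\mu_{N,2}\geq\cos\frac{\pi}{N}$ (this is also the classical fact $\alpha_{N,2}\geq\cos\frac{\pi}{N}$).

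For the matching upper bound I would exhibit the equally spaced configuration $v_k=\left(\cos\frac{\pi k}{N},\sin\frac{\pi k}{N}\right)$ for $k=0,1,\ldots,N-1$. Its coherence is exactly $\cos\frac{\pi}{N}$, since $\absip{v_j}{v_k}=|\cos(\tfrac{\pi(j-k)}{N})|$ is maximized at the minimal gap. The crucial point is that this family is simultaneously \emph{tight}: the frame operator is $\sum_k v_kv_k^{\top}$, and the requirements that its diagonal entries be equal and its off-diagonal entry vanish both amount to the single identity $\sum_{k=0}^{N-1}e^{2\pi i k/N}=0$, which holds because $N\geq 2$. Thus the equally spaced lines form a genuine FUNTF of coherence $\cos\frac{\pi}{N}$, giving $\mu_{N,2}\leq\cos\frac{\pi}{N}$, and therefore $\mu_{N,2}=\cos\frac{\pi}{N}$.

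Combining the two displays yields $\mu_{n+2,n}=\frac{2}{n}\mu_{n+2,2}=\frac{2}{n}\cos\frac{\pi}{n+2}$, as claimed. I expect the only genuine subtlety to be the coincidence that the packing-optimal configuration in the plane already lies inside $\FUNTF(N,\RR^2)$: a priori $\mu_{N,2}$ could strictly exceed the Grassmannian angle $\alpha_{N,2}$, and it is precisely the root-of-unity identity that certifies tightness and collapses the two quantities. Everything else is a routine verification.
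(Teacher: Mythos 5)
Your proof is correct and follows essentially the same route as the paper: reduce to the planar case via the Naimark-complement relation of Theorem \ref{thm_NM} and use the equally spaced frame of $n+2$ vectors in $\RR^2$. The only difference is that the paper simply cites \cite{BK} for the fact that this frame is a tight Grassmannian frame with coherence $\cos\frac{\pi}{n+2}$, whereas you verify that planar fact directly (pigeonhole lower bound plus the root-of-unity tightness check), making the argument self-contained.
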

\begin{proof}
It is known \cite{BK} that the frame $\Phi=\{\varphi_k\}_{k=1}^{n+2}$, where
\[\varphi_k=\left(\cos\frac{k\pi}{n+2}, \sin\frac{k\pi}{n+2}\right)\] is a tight Grassmannian frame (and hence, a 1-Grassmannian frame) for $\RR^2$ with coherence $\alpha =\cos\frac{\pi}{n+2}$. 
The conclusion then follows by Theorem \ref{thm_NM}.
\end{proof}

Since $\mu_{n+2, n}> \sqrt{\frac{2}{n(n+1)}}$ (the Welch bound for the case $m=n+2$), we recover the following known fact.
\begin{corollary} 
	There exists no ETF of $n+2$ vectors for $\RR^n$ if $n\geq 2$.
\end{corollary}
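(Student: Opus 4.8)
The plan is to argue by contradiction, using only the value $\mu_{n+2,n}=\frac{2}{n}\cos\frac{\pi}{n+2}$ just computed, together with the Welch bound (Theorem \ref{Welch}). First I would observe that any ETF is in particular a unit-norm tight frame, i.e.\ a member of $\FUNTF(n+2,\RR^n)$. Hence, if an ETF of $n+2$ vectors for $\RR^n$ existed, its coherence would be bounded below by the minimal coherence over all such FUNTFs, which is exactly $\mu_{n+2,n}$ by the definition of the $1$-Grassmannian angle.

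On the other hand, Theorem \ref{Welch} says that equality in the Welch bound holds precisely for ETFs, so the coherence of this hypothetical ETF would equal the Welch bound, which for $m=n+2$ is $\sqrt{\frac{2}{n(n+1)}}$. Combining the two observations yields $\sqrt{\frac{2}{n(n+1)}}\ge\mu_{n+2,n}=\frac{2}{n}\cos\frac{\pi}{n+2}$, and it remains only to contradict this.

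The sole computation is the strict inequality $\frac{2}{n}\cos\frac{\pi}{n+2}>\sqrt{\frac{2}{n(n+1)}}$ for all $n\ge 2$. Squaring (everything is positive) reduces this to $\cos^2\frac{\pi}{n+2}>\frac{n}{2(n+1)}$, and the half-angle identity $\cos^2\theta=\frac{1+\cos 2\theta}{2}$ reduces it further to $\cos\frac{2\pi}{n+2}>-\frac{1}{n+1}$. For $n\ge 2$ we have $n+2\ge 4$, so $\frac{2\pi}{n+2}\le\frac{\pi}{2}$ and hence $\cos\frac{2\pi}{n+2}\ge 0>-\frac{1}{n+1}$, which settles the inequality and completes the contradiction.

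I do not expect a genuine obstacle here: the chain of inequalities is forced once one notes that an ETF is a FUNTF and that $\mu_{n+2,n}$ is the FUNTF-minimal coherence. The only place requiring a moment's care is the final trigonometric inequality, and even there the half-angle reduction collapses it to the trivial observation that the cosine argument stays in $[0,\pi/2]$ for $n\ge 2$, making the left side nonnegative while the right side is negative.
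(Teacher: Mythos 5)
Your proof is correct and follows exactly the paper's route: the paper likewise deduces the corollary from the identity $\mu_{n+2,n}=\frac{2}{n}\cos\frac{\pi}{n+2}$ together with the observation that this exceeds the Welch bound $\sqrt{\frac{2}{n(n+1)}}$, which an ETF (being a FUNTF) would have to achieve. The only difference is that the paper asserts the inequality $\frac{2}{n}\cos\frac{\pi}{n+2}>\sqrt{\frac{2}{n(n+1)}}$ without proof, whereas you verify it cleanly via the half-angle reduction; this is a welcome addition, not a departure.
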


\begin{theorem} For any $2\leq n\leq m$, the 1-Grassmannian constant $\mu_{m,n}$ satisfies $\mu_{2m, 2n}\leq \mu_{m, n}$.
\end{theorem}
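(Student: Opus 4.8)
The plan is to construct, from an optimal $1$-Grassmannian frame of $m$ vectors for $\RR^n$, an explicit FUNTF of $2m$ vectors for $\RR^{2n}$ whose coherence does not exceed $\mu_{m,n}$. Since $\mu_{2m,2n}$ is by definition the minimum coherence attained over all of $\FUNTF(2m,\RR^{2n})$, exhibiting even a single such frame yields $\mu_{2m,2n}\le\mu_{m,n}$. So let $X=\{x_i\}_{i=1}^m\subset\RR^n$ be a $1$-Grassmannian frame with $\coh(X)=\mu_{m,n}$, and recall that as a FUNTF its frame operator is $S_X=\tfrac{m}{n}I_n$, where $S_X(y)=\sum_{i=1}^m\langle y,x_i\rangle x_i$.

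First I would write $\RR^{2n}=\RR^n\oplus\RR^n$ and, for each $i\in[m]$, set $u_i=\tfrac{1}{\sqrt2}(x_i,x_i)$ and $v_i=\tfrac{1}{\sqrt2}(x_i,-x_i)$; equivalently, this is $X$ tensored with the orthonormal basis $\{\tfrac{1}{\sqrt2}(1,1),\tfrac{1}{\sqrt2}(1,-1)\}$ of $\RR^2$. Each is a unit vector because $\|x_i\|=1$. The key computation is for the inner products: for $i\ne j$ one gets $\langle u_i,u_j\rangle=\langle v_i,v_j\rangle=\langle x_i,x_j\rangle$, while for all index pairs $\langle u_i,v_j\rangle=\tfrac12\bigl(\langle x_i,x_j\rangle-\langle x_i,x_j\rangle\bigr)=0$. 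Hence, among the $2m$ distinct vectors of $Y:=\{u_i\}_{i=1}^m\cup\{v_i\}_{i=1}^m$, the only nonzero off-diagonal Gram entries are the numbers $\langle x_i,x_j\rangle$ with $i\ne j$, so $\coh(Y)=\coh(X)=\mu_{m,n}$.

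It remains to verify that $Y$ is tight, which is the only step requiring genuine checking. Summing the rank-one contributions, a block computation gives that the pair $(u_i,v_i)$ contributes the block-diagonal operator $\diag(x_ix_i^{*},\,x_ix_i^{*})$, the off-diagonal blocks cancelling precisely because of the sign flip that makes the $u$'s and $v$'s orthogonal block-by-block. Summing over $i$, the frame operator of $Y$ is $\diag(S_X,S_X)=\tfrac{m}{n}I_{2n}=\tfrac{2m}{2n}I_{2n}$, a positive multiple of the identity; thus $Y\in\FUNTF(2m,\RR^{2n})$ with $\coh(Y)=\mu_{m,n}$, and therefore $\mu_{2m,2n}\le\mu_{m,n}$. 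I do not anticipate a serious obstacle: the whole argument is the observation that tensoring a FUNTF with an orthonormal basis preserves unit-norm tightness while the cross-block inner products vanish. The one point deserving care is confirming the cancellation of the off-diagonal blocks of the frame operator, which is exactly the property that keeps the added coherence from the second copy at zero.
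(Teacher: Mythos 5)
Your proposal is correct and follows essentially the same route as the paper: the paper uses the identical doubling construction $x_i^{\pm}=\tfrac{1}{\sqrt2}(x_i,\pm x_i)$, observes the same vanishing cross inner products, and verifies tightness by summing frame coefficients (an equivalent check to your block frame-operator computation $\diag(S_X,S_X)=\tfrac{m}{n}I_{2n}$). There is no gap; your version even records the slightly sharper fact that the coherence is exactly $\mu_{m,n}$, where the paper only needs the inequality.
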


\begin{proof} Suppose $X=\{x_i\}_{i=1}^m$ is a 1-Grassmannian frame for $\RR^n$ at angle $\mu_{m, n}$ and with bound $A$.
For each $i\in [m]$, we define a unit-norm frame $X'=\{x_i^+\}_{i=1}^m\cup\{x_i^-\}_{i=1}^m$ for $\RR^{2n}$ by
\[ x_i^+=\frac{1}{\sqrt{2}}(x_i,x_i)\mbox{ and } x_i^-=\frac{1}{\sqrt{2}}(x_i,-x_i).\]
We need to check that $X'$ is a tight frame for $\RR^{2n}$ with $\coh(X')\leq\mu_{m,n}$.

They are clearly unit-norm. We will check that they form a tight frame for $\RR^{2n}$. For any $x\in \RR^{2n}$, we write $x=(y, z)\in \RR^n\times \RR^n$ and compute
\begin{align*}
\frac{1}{2}\sum_{i=1}^m|\langle (y,z),(x_i,x_i\rangle|^2&=
\frac{1}{2}\sum_{i=1}^m|\langle y,x_i\rangle+\langle z,x_i\rangle|^2\\
&= \frac{1}{2}\sum_{i=1}^m\left [ \langle y,x_i\rangle^2 +\langle z,x_i\rangle^2+2 \langle y,x_i\rangle \langle z,x_i\rangle\right ].
\end{align*}
Similarly,
\[ \frac{1}{2}\sum_{i=1}^m|\langle (y,z),(x_i,-x_i\rangle|^2=
\frac{1}{2}\sum_{i=1}^m\left [ \langle y,x_i\rangle^2 +\langle z,x_i\rangle^2-2 \langle y,x_i\rangle \langle z,x_i\rangle\right ].\]
So adding these gives a tight frame bound of A.

Now we compute inner products.  For all $i, j\in [m], i\not=j$, 
\[ |\langle x_i^+,x_j^+\rangle|=\frac{1}{2}2|\langle x_i,x_j\rangle|\le \mu_{m, n}.\]
Also, $ |\langle x_i^-,x_j^-\rangle|\leq \alpha$. Moreover,
\[ |\langle x_i^+,x_j^-\rangle|=\frac{1}{2}\langle (x_i,x_i\rangle,(x_j,-x_j\rangle|=
\frac{1}{2}\left [ \langle x_i,x_j\rangle -  \langle x_i,x_j\rangle\right ]=0.
\] Thus, this frame has cohencece less than or equal to $\mu_{m, n}$.
\end{proof}
Note that the proof of this theorem not only shows that $\mu_{2m, 2n}\leq \mu_{m, n}$ but also gives a construction of a tight frame of $2m$ vectors for $\RR^{2n}$ with cohecence no greater than $\mu_{m, n}$.

\section{Grassmannian frames of $n+2$ vectors in $\RR^n$}
Studying in detail Grassmannian frames of $n+2$ vectors in $\RR^n$ for some small values of $n$ has been given in \cite{BK, FJM, MP}. In this section, we will give some general properties of these frames.
\begin{theorem}\label{thm3}
Let $X=\{x_i\}_{i=1}^{n+2}$ be a Grassmannian frame for $\RR^n$ at angle $\alpha$. Then one of the following must hold:
\begin{enumerate}
\item $X$ contains a subset of $n+1$ vectors which are equiangular.
\item Every vector in the frame is in the core, i.e., $C(X)=X$.
\end{enumerate}
\end{theorem}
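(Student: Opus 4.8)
The plan is to exploit the pinching already provided by the earlier results: Theorem~\ref{thm2} guarantees that the core always contains at least $n+1$ vectors, while trivially $C(X)\subseteq X$ forces $|C(X)|\le n+2$. Since here $m=n+2$, the core must have size either $n+1$ or $n+2$, and this dichotomy is exactly the dichotomy in the statement. First I would record the standing hypotheses of Theorem~\ref{thm2}(2): because $m=n+2>n$, the frame $X$ cannot be an orthonormal basis, and its angle satisfies $\alpha>0$ (one cannot fit $n+2$ mutually orthogonal unit vectors into $\mathbb{R}^n$, so the coherence is strictly positive). Hence the spanning conclusion of Theorem~\ref{thm2}(2) is available for every $x\in C(X)$.

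The two cases are then disposed of separately. If $|C(X)|=n+2$ then $C(X)=X$, which is conclusion (2). The substantive case is $|C(X)|=n+1$. Here I would fix an arbitrary $x\in C(X)$ and invoke Theorem~\ref{thm2}(2), which gives $\spn(x^\alpha_{C(X)})=\mathbb{R}^n$. Now $x^\alpha_{C(X)}\subseteq C(X)\setminus\{x\}$, a set of exactly $n$ vectors, and any spanning family of $\mathbb{R}^n$ must contain at least $n$ vectors; therefore $x^\alpha_{C(X)}=C(X)\setminus\{x\}$. In words, every vector of $C(X)$ distinct from $x$ makes angle exactly $\alpha$ with $x$.

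Since $x$ was arbitrary, this identity holds for each of the $n+1$ vectors of $C(X)$, so every one of the $\binom{n+1}{2}$ pairs in $C(X)$ has inner product of magnitude $\alpha$. Thus $C(X)$ is an equiangular subset of $X$ of cardinality $n+1$, which is conclusion (1), completing the dichotomy.

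I do not expect a genuine obstacle here: once the lower bound $|C(X)|\ge n+1$ and the spanning property of core vectors are granted, the argument reduces to a counting/pigeonhole step, namely that a set of $n$ vectors spanning $\mathbb{R}^n$ cannot omit any of its members as an $\alpha$-neighbor. The only point requiring care is the verification that the hypotheses of Theorem~\ref{thm2}(2) genuinely apply, i.e.\ that $\alpha>0$ and that $X$ is not an orthonormal basis; both are immediate from $m=n+2>n$.
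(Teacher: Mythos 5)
Your proposal is correct and takes essentially the same route as the paper's own proof: both reduce the dichotomy to the size of the core via Theorem~\ref{thm2}, and in the case $|C(X)|=n+1$ both use the spanning property $\spn(x^\alpha_{C(X)})=\RR^n$ together with the observation that $x^\alpha_{C(X)}\subseteq C(X)\setminus\{x\}$ has only $n$ elements to force $x^\alpha_{C(X)}=C(X)\setminus\{x\}$, hence equiangularity. Your write-up merely makes explicit two points the paper leaves implicit (that $\alpha>0$ so the hypotheses of Theorem~\ref{thm2}(2) hold, and the counting step), which is fine.
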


\begin{proof} Suppose that $C(X)\not=X$. Then $|C(X)|=n+1$ by Theorem \ref{thm2} and so $X$ has exactly one isolable vector. Since $\spn(x_{C(X)}^\alpha)=\RR^n$ for all $x\in C(X)$ we must also have $\absip{x}{y}=\alpha$ for all $x,y\in C(X), x\not=y$. Thus $C(X)$ is an equiangular subset of $X$ with $n+1$ vectors.
\end{proof}

\begin{theorem} A Grassmannian frame of $n+2$ vectors in $\RR^n, n>2$ cannot be tight.
\end{theorem}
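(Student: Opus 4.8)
The plan is to argue by contradiction: assume $X=\{x_i\}_{i=1}^{n+2}$ is a tight Grassmannian frame for $\RR^n$ at angle $\alpha$ with $n>2$, and extract a numerical contradiction from the rigid spectrum of the Gram matrix $G=(\langle x_i,x_j\rangle)$. Since $X$ is a FUNTF, its frame operator is $\tfrac{n+2}{n}I$, so $G$ has only the eigenvalues $\tfrac{n+2}{n}$ (multiplicity $n$) and $0$ (multiplicity $2$); computing $\tr(G^2)$ two ways and subtracting the diagonal gives the identity
\[\sum_{i\ne j}\langle x_i,x_j\rangle^2=\frac{(n+2)^2}{n}-(n+2)=\frac{2(n+2)}{n},\]
which, paired with a count of how many off-diagonal entries have absolute value exactly $\alpha$, will drive the whole proof. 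Before splitting into cases I would record two preliminary facts. First, since no ETF of $n+2$ vectors in $\RR^n$ exists, $X$ is not an ETF, so Proposition \ref{Prop_tight Grass}(1) applies and yields $|x_X^\alpha|\le n$ for every $x\in X$. Second, since $X$ is a FUNTF, its coherence is at least the $1$-Grassmannian constant, $\alpha\ge \mu_{n+2,n}=\tfrac{2}{n}\cos\tfrac{\pi}{n+2}$.

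Next I would invoke the dichotomy of Theorem \ref{thm3}. In the case $C(X)=X$, every vector is a core vector, so the spanning property in Theorem \ref{thm2} gives $|x_X^\alpha|\ge n$; together with $|x_X^\alpha|\le n$ this forces $|x_X^\alpha|=n$ for every $x$. Hence the $\pm\alpha$ entries of $G$ account for exactly $(n+2)n$ ordered off-diagonal pairs, each contributing $\alpha^2$ to the sum above, while all remaining (deficient) terms are nonnegative. Dropping the deficient terms in the trace identity yields $(n+2)n\,\alpha^2\le \tfrac{2(n+2)}{n}$, i.e.\ $\alpha\le \tfrac{\sqrt2}{n}$. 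But the lower bound $\alpha\ge\tfrac{2}{n}\cos\tfrac{\pi}{n+2}$ exceeds $\tfrac{\sqrt2}{n}$ exactly when $\cos\tfrac{\pi}{n+2}>\cos\tfrac{\pi}{4}$, that is, when $n>2$. This is the contradiction in the first case, and it is precisely here that the hypothesis $n>2$ is needed.

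In the case $C(X)\ne X$, the proof of Theorem \ref{thm3} tells us the core is a set $\{x_1,\dots,x_{n+1}\}$ of $n+1$ equiangular vectors at angle $\alpha$, with $x_{n+2}$ the unique non-core vector. Each core vector already makes angle $\alpha$ with the other $n$ core vectors, so Proposition \ref{Prop_tight Grass}(1) forces $|\langle x_i,x_{n+2}\rangle|<\alpha$ for all $i\le n+1$; thus the only $\pm\alpha$ entries of $G$ are the $n(n+1)$ ordered pairs inside the core. Now the deficient terms are known exactly rather than merely bounded: applying the tightness relation $\sum_{i=1}^{n+2}\langle x_{n+2},x_i\rangle^2=\tfrac{n+2}{n}$ gives $\sum_{i=1}^{n+1}\langle x_{n+2},x_i\rangle^2=\tfrac{2}{n}$, and feeding this into the trace identity I expect to obtain $n(n+1)\alpha^2=2$, i.e.\ $\alpha=\sqrt{\tfrac{2}{n(n+1)}}$, which is the Welch bound. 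By Theorem \ref{Welch} this makes $X$ an ETF, again contradicting the nonexistence of an ETF of $n+2$ vectors in $\RR^n$.

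The main obstacle is not any single computation but making sure the combinatorial count of deficient pairs is correctly synchronized with the structural dichotomy of Theorem \ref{thm3}, so that the trace identity closes in each branch. The delicate quantitative point is the sharp comparison $\tfrac{2}{n}\cos\tfrac{\pi}{n+2}>\tfrac{\sqrt2}{n}$ in the first case, the exact spot where $n>2$ enters; for $n=2$ equality holds, consistent with the fact that the $n+2=4$ equally spaced lines do form a tight Grassmannian frame in $\RR^2$.
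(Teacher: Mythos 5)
Your proposal is correct and follows essentially the same route as the paper's proof: the same dichotomy on the core via Theorem \ref{thm3}, the same ingredients (Proposition \ref{Prop_tight Grass}, the spanning property of Theorem \ref{thm2}, the value $\mu_{n+2,n}=\tfrac{2}{n}\cos\tfrac{\pi}{n+2}$, and the nonexistence of ETFs of $n+2$ vectors in $\RR^n$), and the same numerical contradictions. The only cosmetic differences are that you aggregate the Gram-matrix identities through $\tr(G^2)$ where the paper works row by row, and in the equiangular-core case you close with ``$\alpha$ equals the Welch bound, hence $X$ is an ETF'' where the paper derives $\alpha=\beta$ against $\beta<\alpha$.
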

\begin{proof} We proceed by way of contradiction. Suppose $X=\{x_i\}_{i=1}^{n+2}$ is a tight Grassmannian frame for $\RR^n$ at angle $\alpha$. Note that there are no ETFs of $n+2$ vectors in $\RR^n$ and by Theorem \ref{thm2}, the core $C(X)$ has at least $n+1$ elements. We consider two cases:

Case 1: $|C(X)|=n+1$. By Theorem \ref{thm3}, $X$ contains an equiangular subset of $n+1$ vectors at angle $\alpha$. We can assume this set $\{x_i\}_{i=1}^{n+1}$. Then for any $j\in [n+1]$, we have
\[\frac{n+2}{n}=\sum_{i=1}^{n+1}|\langle x_j, x_i\rangle|^2+|\langle x_j, x_{n+2}\rangle|^2.\] It follows that $|\langle x_j, x_{n+2}\rangle|$ is the same for all $j\in [n+1]$. If we denote this constant by $\beta$, then $\beta<\alpha$ and
\[\frac{n+2}{n}=1+n\alpha^2+\beta^2.\]  But we also have
\[\frac{n+2}{n}=\sum_{i=1}^{n+2}|\langle x_{n+2}, x_i\rangle|^2=1+(n+1)\beta^2.\] Thus, $\alpha=\beta$, a contradiction.

Case 2: $|C(X)|=n+2$. Since $X$ is not an ETF, by Proposition \ref{Prop_tight Grass}, $|x^\alpha_X|=n$ for all $x\in X$. Thus, on each row of the Gram matrix of $X$, there exists exactly one number whose magnitude is strictly less than $\alpha$. Since $X$ is tight, it follows that these numbers, in magnitude, is the same. If we denote it by $\beta$, then
\[n\alpha^2+\beta^2=\frac{2}{n}.\] Moreover, by Theorem \ref{thm_NM}, we must have $\alpha=\mu_{n+2, n}=\frac{2}{n}\cos\frac{\pi}{n+2}$. But then
\[n\alpha^2=\frac{4}{n}\cos^2\frac{\pi}{n+2}>\frac{2}{n},\] if $n>2$, a contradiction. The proof is complete.
\end{proof}

\begin{remark} It is easy to find a tight Grassmannian frame of 4 vectors in $\RR^2$, for example, take the union of 2 mutually unbiased bases for $\RR^2$.
\end{remark}

\end{document}